\xpatchcmd{\paragraph}{\normalfont}{{\normalfont\bfseries}}{}{}
\newcommand{\defterm}[1]{\emph{#1}}
\theoremstyle{plain}
	\newtheorem{thm}{Theorem}
	\numberwithin{thm}{section}
	\newtheorem*{thm*}{Theorem}
	\newtheorem{cor}[thm]{Corollary}
	\newtheorem*{cor*}{Corollary}
	\newtheorem{prop}[thm]{Proposition}
	\newtheorem*{prop*}{Proposition}
	\newtheorem{fact}[thm]{Fact}
	\newtheorem*{fact*}{Fact}
	\newtheorem{lem}[thm]{Lemma}
	\newtheorem*{lem*}{Lemma}
	\newtheorem*{ex*}{Exercise}
	\newtheorem*{claim*}{Claim}
	\newtheorem*{conj*}{Conjecture}
	\newtheorem{question}[thm]{Question}
	\newtheorem*{question*}{Question}
	\newtheorem{notation}[thm]{Notation}
	\newtheorem*{notation*}{Notation}
\theoremstyle{definition}
	\newtheorem{Def}[thm]{Definition}
	\newtheorem*{Def*}{Definition}
	\newtheorem{rmk}[thm]{Remark}
	\newtheorem*{rmk*}{Remark}
	\newtheorem{soln*}{Solution}
	\newtheorem*{note*}{Note}
	\newtheorem{eg}[thm]{Example}
	\newtheorem*{eg*}{Example}	
	\newtheorem{construction}[thm]{Construction}
	\newtheorem*{construction*}{Construction}
	\newtheorem*{warning*}{Warning}
	\newtheorem{obs}[thm]{Observation}
	\newtheorem*{obs*}{Observation}	
	\newtheorem{recall}[thm]{Recollection}
	\newtheorem*{recall*}{Recollection}	
\newcommand{\reals}{\mathbb{R}}
\newcommand{\nats}{\mathbb{N}}
\newcommand{\op}{\mathrm{op}}
\newcommand{\id}{\mathrm{id}}
\newcommand{\Hom}{\mathrm{Hom}}
\newcommand{\Ob}{\operatorname{Ob}}
\newcommand{\inv}{^{-1}}
\newcommand{\Aut}{\mathrm{Aut}}
\newcommand{\const}{\mathrm{const}}
\newcommand{\CoCone}{\mathrm{CoCone}}
\newcommand{\Fun}{\mathrm{Fun}}
\newcommand{\Set}{\mathsf{Set}}
\newcommand{\Gpd}{\mathsf{Gpd}}
\newcommand{\Cat}{\mathsf{Cat}}
\newcommand{\Top}{\mathsf{Top}}
\newcommand{\Grp}{\mathsf{Grp}}
\DeclareMathOperator{\Ind}{Ind}
\newcommand{\bbB}{\mathbb{B}}
\newcommand{\sk}{\mathrm{sk}}
\newcommand{\calA}{\mathcal{A}}
\newcommand{\calB}{\mathcal{B}}
\newcommand{\calC}{\mathcal{C}}
\newcommand{\calD}{\mathcal{D}}
\newcommand{\calE}{\mathcal{E}}
\newcommand{\calG}{\mathcal{G}}
\newcommand{\calJ}{\mathcal{J}}
\newcommand{\nerve}{\mathbf{N}}
\newcommand{\sSet}{\mathsf{sSet}}
\newcommand{\Mod}{\mathrm{Mod}}
\newcommand{\Gal}{\mathrm{Gal}}
\newcommand{\Lstp}{\mathrm{Lstp}}
\newcommand{\Lst}{\mathrm{Lst}}
\newcommand{\bbU}{\mathbb{U}}
\DeclareMathOperator{\acl}{acl}
\DeclareMathOperator{\tp}{tp}
\newcommand{\inj}{\mathrm{inj}}
\newcommand{\Tor}{\text{-}\mathrm{Tor}}
\title{Classifying spaces and the Lascar group}
\author{Tim Campion}
\address{Department of Mathematics, Johns Hopkins University, Baltimore MD, United States}
\email{tcampio1@jh.edu}
\author{Greg Cousins}
\address{Department of Mathematics \& Statistics, McMaster University, Hamilton ON, Canada}
\email{cousingd@mcmaster.ca}
\author{Jinhe Ye}
\address{Institut de Math\'ematiques de Jussieu - Paris Rive Gauche, Sorbonne Universit\'e, France}
\email{jinhe.ye@imj-prg.fr}
\subjclass[2010]{Primary 03C48, 03C52; Secondary 18G30, 55U10.}
\keywords{Classifying space, Lascar group, Homotopy type}
\begin{document}
\interfootnotelinepenalty=10000
\maketitle
\begin{abstract}
  We show that the Lascar group $\operatorname{Gal}_L(T)$ of a first-order theory $T$ is naturally isomorphic to the fundamental group $\pi_1(|\Mod(T)|)$ of the classifying space of the category of models of $T$ and elementary embeddings. We use this identification to compute the Lascar groups of several example theories via homotopy-theoretic methods, and in fact completely characterize the homotopy type of $|\Mod(T)|$ for these theories $T$. It turns out that in each of these cases, $|\operatorname{Mod}(T)|$ is \emph{aspherical}, i.e. its higher homotopy groups vanish. This raises the question of which homotopy types are of the form $|\Mod(T)|$ in general. As a preliminary step towards answering this question, we show that every homotopy type is of the form $|\mathcal{C}|$ where $\mathcal{C}$ is an Abstract Elementary Class with amalgamation for $\kappa$-small objects, where $\kappa$ may be taken arbitrarily large. This result is improved in another paper.
\end{abstract}

\tableofcontents

\section{Introduction}

\paragraph{The Lascar group.} In \cite{lascar}, Lascar introduced a notion of a Galois group of a complete first-order theory $T$, now known as the Lascar group $\Gal_L(T)$. His main result was a reconstruction theorem: he showed that if the theory $T$ is $\omega$-categorical and the Lascar group over finitely many parameters is always trivial, then one can recover the category of definable sets from the category $\Mod(T)$ of models of the theory. The Lascar group, along with other related notions, has become an important piece of machinery in model theory, see for example \cite{hrushovski2019definability} for some recent progress in understanding the Lascar group. In this paper, we revisit the relationship between $\Mod(T)$ and $\Gal_L(T)$ from the perspective of categorical homotopy theory.

Let us first recall the standard definition of the group $\Gal_L(T)$ in an \emph{ad hoc} categorical framework:

\begin{Def}\label{def:lascar-start}
Given an arbitrary category $\calC$, a full subcategory $\calC_0 \subseteq \calC$, and an object $U \in \calC$, we make the following definitions:
\begin{itemize}
     \item Let $\Lst(\calC,\calC_0,U) \subseteq \Aut_\calC(U)$ denote the subgroup generated by those automorphisms $\alpha \in \Aut_\calC(U)$ such that there exists an $M \in \calC_0$ and a morphism $f: M \to U$ in $\calC$ which is \defterm{fixed by $\alpha$} in the sense that $\alpha  f = f$.
     \item Let $\Gal_L(\calC,\calC_0,U) = \Aut_\calC(U) / \Lst(\calC,\calC_0,U)$.
\end{itemize}
Then for a complete first-order theory $T$, by definition we have
\[\Gal_L(T) := \Gal_L(\Mod(T),\allowbreak \Mod_\kappa(T),\allowbreak \bbU)\]
Here $\Mod(T)$ is the category of models of $T$ with elementary embeddings for morphisms, $\Mod_\kappa(T) \subset \Mod(T)$ is the full subcategory of $\kappa$-small models for some regular cardinal $\kappa>|T|$, and $\bbU$ is a $\kappa$-saturated and strongly-$\kappa$-homogeneous model. It is a theorem that this definition of $\Gal_L(T)$ is independent of the choice of $\kappa$ and of $\bbU$ up to isomorphism.
\end{Def}

Definition \ref{def:lascar-start} positions the Lascar group in the context of pure category theory, but in a way which suffers a number of deficiencies. In particular, the group $\Gal_L(\calC,\calC_0,U)$ depends not only on the category $\calC$, but also on the auxiliary data of $\calC_0 \subseteq \calC$ and $U \in \calC$. Yet in the case of interest, the dependence of $\Gal_L(T)$ on these choices is trivial. One would prefer a description making this independence manifest.

\paragraph{A new perspective.} To shed some light on this phenomenon, let us follow a chain of loose analogies. An analogy between the Lascar group $\Gal_L(T)$ and the absolute Galois group of a field $k$ would liken the choice of $\bbU$ to the choice of an algebraic closure $\bar k$ of $k$ and an embedding $k \to \bar k$. Following another well-known analogy between Galois groups and fundamental groups, this in turn is analogous to the choice of a universal cover $\tilde X$ of a connected space $X$ and a covering map $\tilde X \to X$, or equivalently to a choice of base-point of $X$.

In this paper, we make the ``composite" of these two analogies, relating $\Gal_L(T)$ to the fundamental group of a space, entirely precise. We show (Theorem \ref{thm:mainthm}) that for every first-order theory $T$, there is a space canonically associated to $T$, which we denote $|\Mod(T)|$,\footnote{ See Section \ref{sec:set-con} about set theoretical conventions.} such that $\pi_1(|\Mod(T)|) \cong \Gal_L(T)$. Here $\pi_1(X)$ denotes the fundamental group of a topological space $X$. If $T$ is complete, then $|\Mod(T)|$ is connected, and the base-point-independence of $\pi_1(|\Mod(T)|)$ formally implies the independence of $\Gal_L(T)$ from the choice of a saturated model $\bbU$ (Corollary \ref{cor:lascar-bounded}). Moreover, the space $|\Mod(T)|$ is constructed in an entirely standard way. Namely, $|\Mod(T)|$ is defined to be the classifying space (Definition \ref{def:classifying-space}) of the category $\Mod(T)$ of models of $T$ and elementary embeddings. This is a large category, but we deduce from standard category-theoretic considerations that the classifying space is homotopy equivalent to a small subspace (Proposition \ref{prop:smallequiv}).

\paragraph{Applications.} We develop a few applications of this perspective:
\begin{enumerate}
    \item In Section \ref{sec:lascfund}, we deduce an alternative proof of the invariance of the Lascar group from the choice of $\bbU$, see Corollary \ref{cor:lascar-bounded}. In particular, we obtain weak conditions on a model $\bbU$ such that $\Gal_L(T,\bbU) \cong \Gal_L(T)$, recovering a theorem of \cite{resplendent} (for the notation $\Gal_L(T,\bbU)$, see Definition \ref{def:lascar-group}). We also provide an alternate proof of the usual cardinality bound $|\Gal_L(T)| \leq 2^{|T|}$ (Corollary \ref{cor:lascar-size}).
    \item In Section \ref{sec:examples}, we use the results of the previous section to compute the Lascar groups of several familiar theories using homotopy-theoretic techniques.
    \item The space $|\Mod(T)|$, up to homotopy equivalence, is an invariant of the theory $T$. Since a classifying space is always a CW complex, for the paper, we use the term homotopy type to mean homotopy type of a CW complex. One may ask whether every homotopy type is realized as $|\Mod(T)|$ for a first-order theory $T$. Though we do not know the answer to this question, Section \ref{sec:higher-hty} presents evidence for both an affirmative and negative answer:
    \begin{itemize}
        \item On the one hand, the known ``purely categorical" properties shared by all elementary classes $\Mod(T)$ are nicely summarized in the statement that $\Mod(T)$ is an Abstract Elementary Class (AEC) with amalgamation. We construct, (Theorem \ref{thm:aechty}) for each CW complex $X$ and regular cardinal $\kappa$, an AEC $\calC$ with amalgamation for $\kappa$-presentable objects such that $|\calC| \simeq X$. In a parallel work \cite{campion-ye}, we will provide an alternate construction which realizes the homotopy type of $X$ as $|\calC|$ where $\calC$ is an AEC with amalgamation for \emph{all} objects and no maximal models.
        \item On the other hand, in all the examples of finitary first-order theories $T$ for which we have characterized the homotopy type of $|\Mod(T)|$ in Section \ref{sec:examples}, the space $|\Mod(T)|$ satisfies the restrictive condition of being \emph{aspherical}, meaning its higher homotopy groups vanish.
    \end{itemize}
\end{enumerate}

\paragraph{Questions for Future Work.}
Given the fact that one can identify $\Gal_L(T)$ canonically as $\pi_1(|\Mod(T)|)$, it would be interesting to see if the original work of Lascar in \cite{lascar} can be restated in homotopy-theoretic language.

The Lascar group carries a natural topology, but the fundamental group of a space is merely a discrete group. Thus our results say nothing about the topology on $\Gal_L(T)$. It may be possible to recover this topology in various ways, for example, by enriching $\Mod(T)$ with additional structures such as the ones in \cite{hrushovski2019definability}.

This paper does not consider the relationship between the Lascar group and related notions such as the Kim-Pillay galois group or the Shelah galois group of a theory $T$. We do not know whether these groups admit homotopy-theoretic descriptions. It would be interesting to see what kind of data is needed to enrich $\Mod(T)$ in order to give a homotopy-theoretic definition of the above groups.

Besides fundamental groups, one may also consider other homotopy invariants of the space $|\Mod(T)|$ such as higher homotopy groups, homology groups, and cohomology groups. We do not consider such invariants in this paper because in the theories $T$ which we consider, the higher homotopy groups of $|\Mod(T)|$ vanish. Consequently, in these cases, the space $|\Mod(T)|$ is homotopy equivalent to the classifying space of the group $\pi_1(|\Mod(T)|) \cong \Gal_L(T)$ considered as a discrete group (cf. Example \ref{eg:kgn}), and therefore the homology and cohomology groups of $|\Mod(T)|$ are simply the group homology and cohomology of $\Gal_L(T)$ considered as a discrete group.

In \cite{GKK}, a notion of homology of types was defined and in \cite{DKL}, it was shown that the first homology group of a given type $p$ is given by the abelianization of the relativized Lascar groups. The relationship between their construction and the one considered here is unclear. 

The model theoretic significance of the higher homotopy groups of $|\Mod(T)|$ -- if indeed they can be nonvanishing -- is not clear to us. It has been suggested that non-trivial elements in $\pi_n(|\Mod(T)|)$ might be related to failure of $n+1$-almagation of models of $T$. However, $n+1$-amalgamation involves finding cocones on certain diagrams with contractible classifying space, whereas the homotopy group $\pi_n(|\Mod(T)|)$ has to do with diagrams with classifying space homotopy equivalent to the $n$-sphere $S^n$. Thus, as we will see in Example \ref{eg:tri-free-ran}, such a connection remains unclear.

In \cite{campion-ye}, the first and third authors improve the results of Section \ref{sec:higher-hty}, showing that for every small CW complex $X$, there exists an AEC $\calC$ with amalgamation for all objects and no maximal models, such that $|\calC| \simeq X$. In fact, $\calC$ consists of the models of certain theory in $L_{\kappa^+,\omega}$ where $\kappa = \max(\aleph_0, \# X)$ (where $\# X$ is the number of cells of $X$), with strong embeddings for morphisms.

\paragraph{Overview.}
We begin in Sections \ref{sec:cathty-bkgd} and \ref{sec:mod-bkgd} by recalling some background material in category, homotopy theory and model theory, respectively. In Section \ref{sec:lascfund}, the heart of the paper, we prove our main result (Theorem \ref{thm:mainthm}), exhibiting the Lascar group as the fundamental group of a space, and deduce that the Lascar group is bounded (Corollary \ref{cor:lascar-bounded}), and give a bound on its cardinality (Corollary \ref{cor:lascar-size}). In Section \ref{sec:examples}, we give some toy applications, computing the Lascar groups of several familiar theories with methods of a homotopy-theoretic and category-theoretic flavor. In fact, in all of the examples we consider, we are able to go further and determine the complete homotopy type of the space $|\Mod(T)|$, and find that in these examples $|\Mod(T)|$ is aspherical (i.e. its higher homotopy groups vanish). In Section \ref{subsec:higher-hty-1} we consider the question of which homotopy types can be realized in the form $|\Mod(T)|$. In this paper, we attain only partial results, including Observation \ref{obs:nmm}, \ref{obs:ap} and Theorem \ref{thm:aechty}. We conclude in Section \ref{subsec:higher-hty-2} with a discussion of known categorical criteria on a category $\calC$ implying that $|\calC|$ is aspherical, and find that none of the criteria we consider are satisfied by $\Mod(T)$ for a general finitary first-order theory $T$.

\subsection{Set-theoretic conventions.}\label{sec:set-con}
We fix two strongly inaccessible cardinals $\lambda_1 < \lambda_2$. For each ordinal $\alpha$, let $V_\alpha$ denote the sets of rank less than $\alpha$. Sets in $V_{\lambda_1}$ are called \defterm{small}, sets in $V_{\lambda_2}$ are called \defterm{moderate}, any other set is called \defterm{large}. Models (including ``monster models"), by convention, are assumed to be small. Topological spaces are assumed to be moderate. Categories have no cardinality restrictions. A category is called \defterm{small} if its sets of objects and morphisms are small, and a category with small homsets is called \defterm{locally small}. 

Set-theoretical considerations enter this paper merely as a convenience for two reasons. One is that we would like to be able to talk about the (large) category of all (moderate) categories. The other is that we would like to consider the classifying space of a moderate category.  Both uses can be circumvented, the first by standard circumlocutions and the second by Proposition \ref{prop:smallequiv}, which shows that the topological spaces we are interested in are homotopy equivalent to canonical small subspaces. Thus our results will continue to hold, \emph{mutatis mutandis}, in ZFC.

In particular, we do \emph{not} use ``monster models" which are saturated in their own cardinality.

\subsection{Notational conventions.}\label{not-con}
In this paper, composition in a category is written as follows. If $A \overset f \to B \overset g \to C$ is a composable pair in a category $\calC$, then the composite is written in the usual way as $A \overset{gf} \to C$. This convention includes the case where $\calC$ is a groupoid.

We refer to several specific categories in the paper. $\Set$ is the category of small sets and functions. $\Cat$ is the category of moderate categories. $\Top$ is the category of moderate topological spaces. $\Gpd$ is the full subcategory of $\Cat$ consisting of (moderate) groupoids (see Section \ref{sec:cat-bkgd}), and $\Grp$ is the category of (small) groups. $\sSet$ is the category of moderate simplicial sets. For a first-order theory $T$, $\Mod(T)$ is the category of (small) models of $T$ and elementary embeddings. This differs from the convention of \cite{lpacc} for example, where $\Mod(T)$ is used for the category of models of $T$ with \emph{homomorphisms} as morphisms, and $\operatorname{Elem}(T)$ is used for our $\Mod(T)$.

For a category $\calC$, we will typically use the notation $\Hom_\calC(-,-)$ for the $\Hom$ sets, and we will drop the subscript $\calC$ when the underlying category $\calC$ is clear from the context. In some cases we may also write $\calC(X,Y)$ instead of $\Hom_\calC(X,Y)$ to emphasize the category in which the homset is being taken.

\paragraph{Background.}
Category-theoretically, we assume only that the reader is familiar with the concepts of category, functor, natural transformation, subcategories, full and faithful functors, opposite categories, isomorphism, and equivalence of categories. A familiarity with notions of universal properties is also helpful, but not strictly necessary. We will review several other basic and advanced concepts of category theory in Section \ref{sec:cat-bkgd}. We will assume a bit more categorical sophistication in Section \ref{sec:higher-hty}.

Homotopy-theoretically, we assume only that the reader is familiar with the notions of topological space and continuous map (although a more sophisticated reader should feel free to substitute their favorite model of homotopy theory). We will review several basic and advanced concepts of homotopy theory in Section \ref{sec:hty-bkgd}. 
In Section \ref{sec:higher-hty}, we will assume a bit more homotopical sophistication.

Model-theoretically, we assume only that the reader is familiar with the notions of first-order structure, first-order theory, elementary embedding, and completeness. We will review several more advanced concepts of model theory in Section \ref{sec:mod-bkgd}.

\paragraph{Acknowledgements.}
First and foremost, we would like to thank Kyle Gannon for suggesting the study of the space $|\Mod(T)|$, and to Omar Antol\'{i}n-Camarena, whose note \cite{oac} suggested, however indirectly, that this might not be an uninteresting thing to do. The genesis of this project was serendipitous and would not have been possible without Dominic Culver, Stephan Stolz, and the Notre Dame Graduate Topology Seminar. We would like to thank Reid Barton, Mark Behrens, Will Boney, Adrian Clough, Mike Haskel, Alex Kruckman, Edoardo Lanari, Mike Lieberman, Zhen Lin Low, Anand Pillay, Chris Schommer-Pries, Sergei Starchenko and Jir{\'i} Rosick{\'y} for helpful discussions. And we would like to express our gratitude to the annoymous referee for numerous useful comments. TC was supported in part by NSF grant DMS-1547292. JY was supported in part by NSF grant DMS-1500671.

%%%%%%%%%%%%%%%%%%%%%%%%%%%%%%%%%%%%%%%%%%%%%%%%%%%%%%%%
%%% SECTION 2: BACKGROUND IN HOMOTOPY THEORY
%%%%%%%%%%%%%%%%%%%%%%%%%%%%%%%%%%%%%%%%%%%%%%%%%%%%%%%%

\section{Background in category theory and homotopy theory}\label{sec:cathty-bkgd}

\subsection{Background in category theory}\label{sec:cat-bkgd}

\paragraph{Basic notions}
There are many excellent introductions to category theory available, for example the classic \cite{cwm} or the more modern \cite{catctxt}, which is freely available from the author's website. We recall some basic definitions here, but our treatment is necessarily brief.

\begin{recall}[Equivalence of Categories]\label{rec:eqcats}
A functor $F: \calC \to \calD$ is an \defterm{equivalence of categories} if there is a functor $G: \calD \to \calC$ and natural isomorphisms $\id_\calC \cong GF$ and $\id_\calD \cong FG$. In this case we refer to $G$ as a \defterm{weak inverse} to $F$. A basic theorem says that a functor $F$ is an equivalence of categories if and only if it is full, faithful, and essentially surjective (i.e. surjective on isomorphism classes of objects). Moreover, in this case a weak inverse may be constructed by arbitrarily fixing, for each $D \in \calD$, an object $G(D) \in \calC$ such that $F(G(D)) \cong D$, and an isomorphism $\iota_D: F(G(D)) \to D$. Then $G$ is defined on morphisms $f: D \to D'$ by $G(f) = F\inv(\iota_{D'}\inv f \iota_D)$. We use $\calC\simeq \calD$ to denote that $\calC$ and $\calD$ are equivalent.
\end{recall}

\begin{recall}[Skeleton]\label{rec:skeleton}
A \defterm{skeletal} category is a category where any two isomorphic objects are equal. Any equivalence of categories between skeletal categories is an isomorphism of categories. A \defterm{skeleton} of a category $\calC$ is an equivalent skeletal category. Every category has a skeleton, unique up to isomorphism of categories, which may be constructed by choosing an arbitrary representative of each isomorphism class of objects of $\calC$, and taking $\sk \calC \subseteq \calC$ to be the full subcategory on these objects. The inclusion $\sk \calC \to \calC$ is unique up to non-unique natural isomorphism. Two categories are equivalent if and only if their skeleta are isomorphic.
\end{recall}

\begin{recall}[Size]\label{rec:size}
The \defterm{size} $\#(\calC)$ of a category $\calC$ is the cardinality of its set of morphisms. Note that on account of identity morphisms, this is always at least the number of objects of $\calC$. If $\kappa$ is a cardinal, we say that $\calC$ is \defterm{$\kappa$-small} if $\#(\calC) <\kappa$. A more important notion is the \defterm{essential size} of a category $\calC$, i.e. the size of a skeleton of $\calC$.
\end{recall}

\begin{recall}[Connected]\label{rec:conncat}
A category $\calC$ is \defterm{strongly connected} if its underlying directed graph is strongly connected, i.e. for every two objects $x,y \in \calC$, there is a morphism from $x$ to $y$. A category $\calC$ is \defterm{connected} if its underlying directed graph is weakly connected, i.e. for every two objects $x,y \in \calC$ there exists a zigzag of morphisms $x=x_0 \leftarrow x_1 \to x_2 \leftarrow \dots \leftarrow x_{n-1} \to x_n = y$.
\end{recall}

\begin{recall}[Adjunction]
An \defterm{adjunction} consists of a pair of functors $F: \calC {}^\to_\leftarrow \calD: U$ equipped with a family of bijections $\Hom_\calD(FC,D) \cong \Hom_\calC(C,UD)$, natural in $C,D$. In this situation $F$ is said to be \defterm{left adjoint} to $U$, and $U$ is \defterm{right adjoint} to $F$. Alternatively, an adjunction may be specified by natural transformations $\eta: \id_\calC \Rightarrow UF$ (the \defterm{unit}) and $\varepsilon: FU \Rightarrow \id_\calD$ (the \defterm{counit}) satisfying certain equations. The bijection of homsets encodes a \emph{universal property}: in order to define a morphism $FC \to D$, it suffices to define a morphism $C \to UD$.
\end{recall}

\begin{recall}[Monomorphism]\label{rec:mono}
Let $\calC$ be a category. A \defterm{monomorphism} in $\calC$ is a morphism $f : D \to E$ such that for every two morphisms $g,h: C \to D$, if $fg=fh$, then $g=h$. For example, in the category $\Set$ of sets and functions, the monomorphisms are precisely the injective functions.
\end{recall}

\begin{recall}[Fibration, Slice Category]\label{rec:fibration}
The notion of a \emph{Grothendieck opfibration} is used as a convenience in Section \ref{sec:examples}. Let $F: \calE \to \calB$ be a functor. Let $f: B \to B'$ be a morphism in $\calB$ and $E \in \calE$ an object with $F(E) = B$. A \defterm{cocartesian lift} of $f$ to $E$ is a morphism $g: E \to E'$ with $F(g) = f$ satisfying the following universal property. For every $f': B' \to B''$ and every $g'' : E \to E''$ with $F(g'') = f'f$, there exists a unique $g': E' \to E''$ such that $F(g') = f'$ and $g'g = g''$. By the universal property, cocartesian lifts are unique up to unique vertical isomorphism; that is, if $g,: E \to E'$ and $\bar g: E \to \bar E$ are two cocartesian lifts of $f$ to $E$, then there is a unique map $\iota: E' \to \bar E$ such that $F(\iota) = \id_{B'}$ and $\iota g = \bar g$; moreover, $\iota$ is an isomorphism. The functor $F$ is said to be a \defterm{Grothendieck opfibration} if for every $f: B \to B'$ and $E \in \calE$ such that $F(E) = B$, there is a cocartesian lift of $f$ to $E$. Note that any isomorphism in $\calE$ is cocartesian, and conversely if $f: B \to B'$ is an isomorphism, then any cocartesian lift of $f$ is an isomorphism.

Let $F: \calC \to \calD$ be a functor and $D \in \calD$. The \defterm{fiber} $F\inv(D)$ of $F$ over $D$ is the subcategory of $\calC$ consisting of objects $C$ with $F(C)=D$ and morphisms $f$ with $F(f) = \id_D$. The \defterm{slice category}  $F \downarrow D$ of $F$ over $D$ is the following category. The objects are pairs $(C, f)$ where $C \in \calC$ and $f: F(C) \to D$ is a morphism in $\calD$. The morphisms from $(C,f)$ to $(C',f')$ are morphisms $g: C \to C'$ such that $f' F(g)  = f$. If the functor $F$ is clear from context, we may write $\calC \downarrow D$ instead of $F \downarrow D$. If $h: D \to D'$ is a morphism in $\calD$, then there is a \defterm{reindexing functor} $h_\ast: F \downarrow D \to F \downarrow D'$ defined by $(C,f) \mapsto (C,hf)$. There is an inclusion $F\inv(D) \to F \downarrow D$, and if $F$ is a Grothendieck opfibration the inclusion functor has a left adjoint denoted $(C,f) \mapsto f_\ast(C)$, defined by taking $f_\ast(C)$ to be the codomain of an arbitrary cocartesian lift of $f$ to $C$. If $F$ is a Grothendieck opfibration, then there is also a \defterm{reindexing} functor $h_\ast: F\inv(D) \to F\inv(D')$ defined similarly. The two types of reindexing functor are compatible in that there is an isomorphism $(hf)_\ast(C) \cong h_\ast(f_\ast(C))$, natural in $(C,f) \in F \downarrow D$.
\end{recall}

\begin{recall}[(Co)Cones, (Co)Limits]
An \defterm{initial object} in a category $\calC$ is an object $0 \in \calC$ with the following univeral property: every object $C \in \calC$ admits a unique morphism $0 \to C$. If $\calC$ has an initial object, then it is unique up to unique isomorphism, so we speak of \emph{the} initial object, and denote it $0$.

A \defterm{diagram} in a category $\calC$ consists of a category $J$ (the \defterm{shape} of the diagram) and a functor $X: J \to \calC$. When thinking of a functor as a diagram, we will often denote its application to objects and morphisms using subscript notation, so that if $j \in J$ we write $X_j = X(j)$ and if $u: j \to j'$ is a morphism in $J$, we write $X_u: X_j \to X_{j'}$. We will sometimes also use notation such as $(X_j)_{j \in J}$ to denote a functor $X$ of shape $J$, or even $(X_j)_j$ when $J$ is implicit. A \defterm{cocone} on a diagram $X: J \to \calC$ consists of an object $V \in \calC$ (the \defterm{vertex} of the cone) and morphisms $\lambda_j: X_j \to V$ for all $j \in J$ (the \defterm{legs} of the cone) such that $\lambda_j X_u = \lambda_{j'}$ for all $u: j' \to j$ in $J$. If $(V,\lambda)$ and $(V',\lambda')$ are cocones on $X$, then a \defterm{morphism of cocones} $f: (V,\lambda) \to (V',\lambda')$ consists of a morphism $f: V \to V'$ such that $f\lambda_j = \lambda'_j$ for all $j \in J$; thus there is a category $\CoCone(X)$ of cones on $X$. A \defterm{colimit} of $X$ is an initial object in $\CoCone(X)$; if a colimit of $X$ exists it is unique up to unique isomorphism of cocones on $X$, so we speak of \emph{the} colimit and denote it $(\varinjlim X, \iota^X)$ or $(\varinjlim_{j \in J} X_j, \iota^X)$ or just $(\varinjlim_j X_j,\iota^X)$ when $J$ is implicit. Moreover, we will often leave the colimiting cocone $\iota$ implicit, referring to $\varinjlim X$ abusively as the colimit of $X$.

For any functor $F: \calC \to \calD$ and any category $I$, there is an induced map from $I$-shaped diagrams in $\calC$ to $I$-shaped diagrams in $\calD$ as well as an induced map on cocones over such diagrams. We say that $F$ \defterm{preserves} $I$-shaped colimits if it carries colimiting cocones on $I$-shaped diagrams to colimiting cocones on $I$-shaped diagrams. Any left adjoint functor preserves all colimits.

\defterm{Terminal objects} (denoted $1$), \defterm{cones}, and \defterm{limits} of $F$ (denoted $\varprojlim F$), and preservation thereof, are defined dually.
\end{recall}

\begin{recall}[Filtered]
Perhaps less familiar is the notion of a \defterm{filtered} category, i.e. a category $\calC$ such that every functor $F: \calJ \to \calC$, with $\calJ$ finite, admits a cocone; to show that $\calC$ is filtered, it suffices to check the following 3 cases: (1) $\calJ$ is empty, (2) $\calJ$ consists of two objects with no nonidentity morphisms, (3) $\calJ$ is the category depicted as $0 {}\rightrightarrows 1$. Filtered categories generalize directed posets; in particular, if $\calJ$ is a preorder (i.e. a category where each homset as at most one element), then $\calJ$ is a filtered category if and only if it is a directed preorder. More generally, if $\kappa$ is a regular cardinal, a \defterm{$\kappa$-filtered} category is a category $\calC$ such that every functor $F: \calJ \to \calC$ with $\calJ$ $\kappa$-small (cf. Recollection \ref{rec:size}), admits a cocone.
\end{recall}

\begin{eg}[Examples of (Co)limits]\label{eg:colim}
Let $J$ be a discrete category, i.e. one with no nonidentity morphisms. Then the data of a diagram of shape $J$ in $\calC$ is equivalent to the data a function $X: \Ob J \to \Ob \calC$. A colimit of shape $J$ is called a \defterm{coproduct}, and denoted $\amalg_{j \in J} X_j$, or by $X_1 \amalg X_2$ in the binary case. The category $\Set$ has coproducts, given by disjoint union. The category $\Top$ has coproducts, also given by disjoint union. Incidentally, a set $1$ with one element is a terminal object in $\Set$, and the discrete category $1$ with one object is a terminal object in $\Cat$.

In general, small colimits exist in $\Set$ and in $\Top$. They are computed by $\varinjlim_{j \in J} X_j = \amalg_{j \in J} X_j / \sim$ where $\sim$ is the equivalence relation \emph{generated} by setting $x \in X_j$ equivalent to $X_u(x)$ for every $u: j \to j'$. There are some cases where the equivalence relation so generated admits a simplified description.

For example, let $J$ be the category $1 \leftarrow 0 \to 2$. A diagram of shape $J$ is called a \defterm{span}, and consists of three objects with two nonidentity morphisms between them in the shape $B \leftarrow A \to C$. A colimit of shape $J$ is called a \defterm{pushout}, and denoted $B \cup_A C$. In $\Set$, pushouts are particularly easy to describe when the maps $B \overset i \leftarrow A \xrightarrow j C$ are injective: in this case $B \cup_A C$ is $B \amalg C$ modulo the equivalence relation identifying $i(a)$ with $j(a)$.

For another example, if $J$ is small and filtered and $F: J \to \Set$ is a $J$-indexed diagram in $\Set$, then $\varinjlim_j F(j) = \amalg_j F(j) / \sim$, where $\sim$ identifies $x \in F(j)$ with $x' \in F(j')$ if and only if there is $j'' \in J$ and morphisms $j' \xrightarrow f j'' \overset{f'}{\leftarrow} j'$ such that $F(f)(x) = F(f')(x')$. A similar construction works in $\Cat$, and in many other categories.
\end{eg}

\begin{Def}[JEP, AP]\label{def:JEP-AP}
Let $\calC$ be a category.
\begin{enumerate}
\item We say that $\calC$ has the \defterm{joint embedding property (JEP)} if for any finite, discrete diagram in $\calC$ there is a cocone. In other words, for every $A,B \in \calC$, there is an object $C$ and two morphisms $A\rightarrow C$, $B\rightarrow C$.
\item We say that $\calC$ has the \defterm{amalgamation property (AP)} if for any \defterm{span} in $\calC$, i.e. a diagram of the form $B \overset{f}{\leftarrow} A \xrightarrow f C$, there is a cocone. In other words, there is an object $D$ and morphisms $B \overset{g'}\to D \overset{f'}\leftarrow C$ such that $g'f = f'g$.
\end{enumerate}
\end{Def}

\begin{eg}\label{eg:modt-jep-ap}
$\Mod(T)$ has the joint embedding property and amalgamation property. This follows from compactness and the Downward L\"owenheim-Skolem Theorem.
\end{eg}

\paragraph{Groupoids}

\begin{recall}[Groupoid]\label{rec:gpd}
A \defterm{groupoid} is a category where every morphism is an isomorphism. We denote by $\Gpd$ the category of moderate groupoids. If $G$ is a group, then there is a 1-object groupoid, denoted $\bbB G$, whose morphisms are the elements of $G$, with composition given by multiplication in $G$. Note that if $G$ and $H$ are groups, then $\bbB G$ and $\bbB H$ are equivalent as groupoids if and only if they are isomorphic as groups. 
Moreover, $G\to H$ is an isomorphism of groups iff the induced functor $\bbB G\to \bbB H$ is an equivalence of categories. If $\Gamma$ is any groupoid and $x \in \Gamma$ is an object, then we will denote $\pi_1(\Gamma,x):= \Hom_\Gamma(x,x)$. There is a canonical inclusion functor $\iota_x: \bbB \pi_1(\Gamma,x) \to \Gamma$, which is fully faithful. We say that a groupoid $\Gamma$ is \defterm{trivial} if it is nonempty and for every two objects $x,y \in \Gamma$ there is a unique morphism $x \to y$. Equivalently, $\Gamma$ is trivial if it is equivalent to $\bbB G$ for $G$ the trivial group, i.e. if it is equivalent to the terminal category $1$ (cf. Example \ref{eg:colim}).
\end{recall}

\begin{recall}[Connected Groupoids]\label{rec:conngpd}
Let $\Gamma$ be a connected groupoid (cf. Recollection \ref{rec:conncat}). Then $\Gamma$ is strongly connected, and so all of its objects are isomorphic. For if $x_0 \leftarrow x_1 \to x_2 \leftarrow \dots \leftarrow x_{n-1} \to x_n$ is a zigzag, then by taking the inverses of all the backward-pointing arrows and composing, we obtain an isomorphism $x_0 \to x_n$. It follows that if $\Gamma$ is a connected groupoid and $x \in \Gamma$ is an object, then the inclusion $\bbB \pi_1(\Gamma,x) \to \Gamma$ is full, faithful, and essentially surjective and so an equivalence of groupoids (cf. Recollection \ref{rec:eqcats}). Thus when $\Gamma$ is a nonempty connected groupoid, we may sometimes refer to $\pi_1(\Gamma)$ loosely. In particular, when $\Gamma$ is connected, the groups $\pi_1(\Gamma,x)$ are isomorphic for different $x \in \Gamma$.

When $\Gamma$ is not connected, it is a disjoint union of its connected components, and equivalent to its skeleton, which is a disjoint union of one-object groupoids. Thus it is often fruitful to think of a groupoid simply as a collection of groups.
\end{recall}

\begin{recall}[Fundamental Groupoid of a Category]\label{rec:catfund}
The inclusion $\Gpd \to \Cat$ has a left adjoint denoted $\Pi_1: \Cat \to \Gpd$. The gropoid $\Pi_1(\calC)$ is called the \defterm{fundamental groupoid of $\calC$}, or sometimes the \defterm{free groupoid on $\calC$}. For $\calC \in \Cat$, we will denote by $\llbracket-\rrbracket: \calC \to \Pi_1(\calC)$ the unit of this adjunction, so that for $f: C \to C'$ a morphism in $\calC$, $\llbracket f \rrbracket$ denotes the equivalence class of $f$ considered as a morphism in $\Pi_1(\calC)$. The groupoid $\Pi_1(\calC)$ is constructed by ``freely adjoining inverses to $\calC$"; see \cite[Chapter I.1]{gabriel-zisman} for an explicit description. In practice, it suffices to know that there is a functor $\llbracket - \rrbracket: \calC \to \Pi_1(\calC)$ and that every morphism of $\Pi_1(\calC)$ is invertible. Beware that $\llbracket-\rrbracket: \calC \to \Pi_1(\calC)$ is generally far from faithful, even on automorphisms. For $C \in \calC$, we also write $\pi_1(\calC,C) := \Hom_{\Pi_1(\calC)}(C,C) = \pi_1(\Pi_1(\calC),C)$. If $\calC$ is connected and nonempty, then we may loosely speak of $\pi_1(\calC)$. We note that $\Pi_1(\calC)$ has the same objects as $\calC$, and is connected iff $\calC$ is. Moreover, we have $\#(\Pi_1(\calC)) \leq \aleph_0 + \#(\calC)$, and $\Pi_1(\sk \calC) \simeq \sk \Pi_1(\calC)$.

This adjunction is in fact a 2-categorical adjunction. We will not spell out what this means, but we note a few consequences, which can also be deduced directly from the explicit description in \cite[Chapter I.1]{gabriel-zisman}. First, if $F,G: \calC\rightrightarrows \Gamma$ are two functors to a groupoid and $\alpha: F \Rightarrow G$ is a natural isomorphism, and if $\tilde F, \tilde G: \Pi_1(\calC)\rightrightarrows \Gamma$ are the functors induced by the universal property of the adjunction, then there is an induced natural isomorphism $\tilde \alpha: \tilde F \Rightarrow \tilde G$. It follows that if $\calC$ and $\calD$ are equivalent categories, then their fundamental groupoids are also equivalent.
\end{recall}

\paragraph{Accessible categories}

The theory of accessible categories will be used in Section \ref{sec:higher-hty}; all of Section \ref{sec:lascfund} also generalizes to this setting, but we do not assume familiarity with accessible categories in Section \ref{sec:lascfund}. For the theory of accessible categories, we refer the reader to \cite{lpacc}.

\begin{Def}[Accessible Category]\label{def:acc}
Let $\kappa$ be a small regular cardinal. A \defterm{$\kappa$-accessible category} is a moderate,\footnote{Most treatments of accessible categories such as \cite{lpacc} work in a slightly different foundational setup than we do; the ``sets" of such treatments should be identified with the \emph{small} sets in our treatment.} locally small category with $\kappa$-filtered colimits (i.e. colimits of functors $F: J \to \calC$ where $J$ is small and $\kappa$-filtered), and an essentially small full subcategory $\calC_\kappa \subseteq \calC$ of \defterm{$\kappa$-presentable} objects -- objects $C$ such that $\Hom_\calC(C,-): \calC \to \Set$ commutes with $\kappa$-filtered colimits for $C \in \calC_\kappa$ -- and moreover every object $X \in \calC$ is the colimit of a small, $\kappa$-filtered diagram in $\calC_\kappa$. In this case the $\kappa$-presentable objects are precisely the retracts of objects of $\calC_\kappa$: for if $X$ is $\kappa$-presentable, then we write $X = \varinjlim_i X_i$ with $X_i \in \calC_\kappa$, and by $\kappa$-presentability of $X$ (and the construction of filtered colimits in $\Set$), the identity map $X \to X$ must factor through some stage $X_i$ of the colimit, so that $X$ is a retract of $X_i$. The $\kappa$-presentable objects are closed under $\kappa$-small colimits \cite[Proposition 1.16]{lpacc}.

It may be comforting to know that every instance of ``filtered colimit" in this definition may be replaced with ``directed colimit" because every $\kappa$-filtered category admits a cofinal $\kappa$-directed poset \cite[Remark 1.12]{lpacc}. We say \defterm{finitely accessible} for $\aleph_0$-accessible, and \defterm{accessible} to mean $\kappa$-accessible for some (unspecified) $\kappa$.
\end{Def}

\begin{eg}[$\Mod(T)$]\label{eg:modt-acc}
Let $T$ be a first-order theory. The category $\Mod(T)$ of (small) models of $T$ is accessible with all filtered colimits \cite[Remark 2.3(4)]{beke-rosicky}.\footnote{In \cite{beke-rosicky} a different foundational setup is used. Sets in \cite{beke-rosicky} should be identified with our \emph{small} sets.} More precisely, filtered colimits are given by union \cite[Remark 2.3(4)]{beke-rosicky}, for $\kappa\geq |T|^+$ the $\kappa$-presentable objects $\Mod_\kappa(T)$ are precisely the models of cardinality $<\kappa$ \cite[Remark 2.3(4)]{beke-rosicky}, and so the Downward L\"owenheim-Skolem Theorem implies that $\Mod(T)$ is $\kappa$-accessible for $\kappa \geq |T|^+$. Notably, $\Mod(T)$ satisfies AP and, if $T$ is complete, also JEP (cf. Definition \ref{def:JEP-AP}).
\end{eg}

\begin{recall}[The $\Ind$ Construction]\label{def:ind}
% If $\calC$ is a small category and $\kappa \leq \mu \leq \lambda_1$ are regular cardinals (recall from Section \ref{sec:set-con} that $\lambda_1$ denotes the size of the universe of small sets), we will denote by $\Ind_\kappa^\mu(\calC)$ the following category. An object is a $\mu$-small, $\kappa$-filtered diagram $(C_i)_{i \in I}$ in $\calC$, and a morphism from $(C_i)_{i \in I}$ to $(D_j)_{j \in J}$ is a natural transformation $\varinjlim_i \Hom_\calC(-,C_i) \to \varinjlim_j \Hom_\calC(-,D_j)$, where the colimit is taken in the presheaf category $\Fun(\calC^\op, \Set)$. Composition is as in $\Fun(\calC^\op,\Set)$. Clearly, $\Ind_\kappa^\mu(\calC)$ is equivalent to the full subcategory of the functor category $\Fun(\calC^\op,\Set)$ consisting of those functors which are $\mu$-small, $\kappa$-filtered colimits of representable functors. Alternatively, $\Ind_\kappa^\mu(\calC)$ may be regarded as simply a category of diagrams with homsets given by the formula $\Hom((C_i)_{i \in I}, (D_j)_{j \in J}) = \varprojlim_{i \in I} \varinjlim_{j \in J} \Hom(C_i, D_j)$. We write $\Ind_\kappa$ for $\Ind_\kappa^{\lambda_1}$, $\Ind^\kappa$ for $\Ind^\kappa_\omega$, and $\Ind$ for $\Ind_\omega^{\lambda_1}$.

If $\calC$ is a small category and $\kappa < \lambda_1$ is a regular cardinal (recall from Section \ref{sec:set-con} that $\lambda_1$ denotes the size of the universe of small sets), we will denote by $\Ind_\kappa(\calC)$ the following category. An object is a small, $\kappa$-filtered diagram $(C_i)_{i \in I}$ in $\calC$, and a morphism from $(C_i)_{i \in I}$ to $(D_j)_{j \in J}$ is a natural transformation $\varinjlim_i \Hom_\calC(-,C_i) \to \varinjlim_j \Hom_\calC(-,D_j)$, where the colimit is taken in the presheaf category $\Fun(\calC^\op, \Set)$. Composition is as in $\Fun(\calC^\op,\Set)$. Clearly, $\Ind_\kappa(\calC)$ is equivalent to the full subcategory of the functor category $\Fun(\calC^\op,\Set)$ consisting of those functors which are small, $\kappa$-filtered colimits of representable functors. Alternatively, $\Ind_\kappa(\calC)$ may be regarded as simply a category of diagrams with homsets given by the formula $\Hom((C_i)_{i \in I}, (D_j)_{j \in J}) = \varprojlim_{i \in I} \varinjlim_{j \in J} \Hom(C_i, D_j)$. We write $\Ind$ for $\Ind_\omega$.

There is a natural fully faithful functor $y_\calC: \calC \to \Ind_\kappa(\calC)$ sending $C$ to the one-object diagram at $C$ (viewing $\Ind_\kappa(\calC)$ as a category of presheaves, this corresponds to the Yoneda embedding), and we will often identify $\calC$ with its image under this embedding. The category $\Ind_\kappa(\calC)$ has all small, $\kappa$-filtered colimits, computed as in the presheaf category $\Fun(\calC^\op, \Set)$, and the objects of $\calC$ are $\kappa$-presentable in $\Ind_\kappa(\calC)$. In fact, the $\kappa$-presentable objects of $\Ind_\kappa(\calC)$ are precisely the retracts of objects of $\calC$. Moreover, every object of $\Ind_\kappa(\calC)$ is canonically a small, $\kappa$-filtered colimit of objects of $\calC$. Thus $\Ind_\kappa(\calC)$ is a $\kappa$-accessible category. A central theorem of the theory of accessible categories says that a category is $\kappa$-accessible if and only if it is equivalent to $\Ind_\kappa(\calC)$ for some small category $\calC$. The category $\Ind_\kappa(\calC)$ has the universal property that if $G:\calC \to \calD$ is a functor where $\calD$ has small, $\kappa$-filtered colimits, then there is a functor $\tilde G: \Ind_\kappa(\calC) \to \calD$ preserving small, $\kappa$-filtered colimits such that $\tilde G|_\calC = G$; $\tilde G$ is unique up to unique natural isomorphism.
\end{recall}

\begin{recall}[The restricted $\Ind$ construction]\label{def:ind-res}

In Section \ref{subsec:higher-hty-1}, we will use the following variant of the $\Ind_\kappa$ construction, as expanded on in \cite{mo-zl}. For regular cardinals $\kappa \leq \mu < \lambda_1$, we define $\Ind_\kappa^\mu(\calC)$ to be the full subcategory of $\Ind_\kappa(\calC)$ on diagrams which are $\mu$-small. The category $\Ind_\kappa^\mu(\calC)$ inherits many properties from the larger category $\Ind_\kappa(\calC)$; see \cite{mo-zl} and \cite{zhen-lin} for details.
% The construction $\Ind_\kappa^\mu$ determines a functor from the category of small categories to $\Cat$: if $F: \calC \to \calD$ is a functor then $\Ind_\kappa^\mu(F): \Ind_\kappa^\mu(\calC) \to \Ind_\kappa^\mu(\calC)$ is induced from the universal property of $\Ind_\kappa^\mu(\calC)$ by $y_\calD F: \calC \to \Ind_\kappa^\mu(\calD)$. Thus the embedding $y: \calC \to \Ind_\kappa^\mu(\calC)$ is a natural transformation. The functor $\Ind_\kappa^\mu$ commutes with $\mu$-filtered colimits up to canonical equivalence because any $\mu$-small diagram must factor through some stage of the colimit.
We will only use this construction in the case $\kappa = \omega$; we write $\Ind^\mu(\calC)$ for $\Ind_\omega^\mu(\calC)$.

In particular, $\Ind^\kappa(\calC)$ has $\kappa$-small filtered colimits, computed as in $\Ind(\calC)$, so that if $C \in \calC$, then $\Ind^\kappa(\calC)(C,-)$ commutes with $\kappa$-small filtered colimits and every object of $\Ind^\kappa(\calC)$ is a $\kappa$-small, filtered colimit of objects of $\calC$ \cite[Proposition 1.11, Example 1.8]{zhen-lin}. If $\calC \to \calD$ is a functor where $\calC$ is essentially small and $\calD$ has $\kappa$-small filtered colimits, then we obtain an induced functor $\Ind^\kappa(\calC) \to \calD$ preserving $\kappa$-small filtered colimits. 
\end{recall}

\begin{lem}\label{lem:ind-mono}
Let $\kappa \leq \mu \leq \lambda_1$ be regular cardinals. If $\calC$ is a category of monomorphisms, then $\Ind_\kappa^\mu(\calC)$ is a category of monomorphisms.
\end{lem}
\begin{proof}
Let $f: Y \to Z$ be a morphism in $\Ind_\kappa^\mu(\calC)$, and let $g_1,g_2: X \rightrightarrows Y$ be morphisms in $\Ind_\kappa^\mu(\calC)$. Assume for contradiction that $fg_1 = fg_2$ but $g_1 \neq g_2$. First, we may assume that $X \in \calC$. For in general, $X$ is a colimit $X = \varinjlim_i X_i$ of objects of $\calC$, and so $g_1 = g_2 \Leftrightarrow g_1|_{X_i} = g_2|_{X_i}$ for each leg of the diagram. Now, assuming that $X \in \calC$, we may assume that $Y \in \calC$. For $X$ is $\kappa$-presentable and $Y = \varinjlim_j Y_j$ is a $\kappa$-filtered colimit of objects of $\calC$, so there are morphisms $\bar g_1, \bar g_2: X \rightrightarrows Y_j$ such that $\iota_j \bar g_1 = g_1$ and $\iota_j \bar g_2 = g_2$, where $\iota_j: Y_j \to Y$ is the canonical inclusion, and thus $\bar g_1 \neq \bar g_2$. Now, assuming that $X, Y \in \calC$, we may assume that $Z \in \calC$. For $Y$ is $\kappa$-presentable and $Z = \varinjlim_k Z_k$ is a $\kappa$-filtered colimit of objects of $\calC$, so there is some $Z_k \to Z$ through which $f$ factors. Thus we may assume that the whole diagram is in $\calC$. But then we obtain a contradiction, since $f$ is a monomorphism in $\calC$.
\end{proof}

We thank Zhen Lin Low for pointing out the proof of the following \cite{mo-zl}.
\begin{fact}\label{fact:accprops-ind-ind}
Let $\calC$ be a small category, and let $ \kappa < \lambda_1$ be a regular cardinal. There is an equivalence of categories $F: \Ind_\kappa(\Ind^\kappa(\calC)) \to \Ind(\calC)$, natural in $\calC$.
\end{fact}
\begin{proof}
This follows from the results of \cite{zhen-lin}. See Example 1.8, Proposition 3.5, and Proposition 3.10 in \emph{op.~cit.} Be aware that the use of subscripts and superscripts in \emph{op.~cit.}, Definition 1.2 clashes with our own notation: the subscripts there correspond to our superscripts, while our subscripts have no direct analog in their notation. See \cite{mo-zl} for further discussion.
\end{proof}

\subsection{Background in homotopy theory}\label{sec:hty-bkgd}

We recall some basic concepts from the homotopy theory of categories which are necessary to formulate and prove Theorem \ref{thm:mainthm}. The discussion here will be terse and lacking in motivation. For general homotopy theory there are many references available, such as \cite{hatcher}. For simplicial homotopy theory, we recommend \cite{riehl} or \cite{goerss_jardine}. For the homotopy theory of categories specifically, see \cite{quillen}.

\begin{Def}[Classifying Space of a Category]\label{def:classifying-space}
Let $\calC$ be a moderate category. For $n \in \nats$, let $\calC_n$ denote the (moderate) set of paths of length $n$ in $\calC$. That is, an element of $\calC_n$ consists of a length-$n$ chain 
\[
\begin{tikzcd}
C_0\arrow[r,"f_0"]&\cdots\arrow[r,"f_n"]&C_n
\end{tikzcd}
\]
of composable morphisms in $\calC$. Note that $\calC_0$ is precisely the set of objects in $\calC$. The \defterm{classifying space} of $\calC$ is the topological space $|\calC| = \amalg_n \calC_n \times \Delta^n / \sim$, where each $\calC_n$ is equipped with the discrete topology. Here $\Delta^n = \{(x_0,\dots,x_n) \in \reals^{n+1} \mid 0 \leq x_i \leq \sum_i x_i = 1\}$ is the topological $n$-simplex, and the equivalence relation $\sim$ is generated by the following identifications:
\begin{itemize}
     \item $((f_1,\dots,f_{i-1},\id,f_{i+1}\dots,f_n),(x_0,\dots,x_n)) \sim \\ ((f_1,\dots,f_{i-1},f_{i+1},\dots,f_n), (x_0,\dots,x_{i-1}+x_{i},\dots,x_n))$.
     \item $((f_1,\dots,f_n),(x_0,\dots,x_{i-1},0,x_{i+1},\dots,x_n)) \sim \\ ((f_1,\dots,f_{i+1}f_i,\dots,f_n),(x_0,\dots,x_{i-1},x_{i+1},\dots,x_n))$.
\end{itemize}
When $i=0$ in the second bullet, ``$f_{i+1} f_i$" really means ``$f_1$", and likewise ``$f_{i+1}f_i$" really means ``$f_n$" when $i=n$. If $F: \calC \to \calD$ is a functor, there is an induced map $|F|: |\calC| \to |\calD|$. Thus the classifying space determines a functor $|-|: \Cat \to \Top$.\footnote{For technical reasons, one often restricts attention to some subcategory of all topological spaces, such as \emph{compactly-generated spaces}. But nothing in this paper depends strongly on this distinction.}.
\end{Def}
\begin{rmk}\label{rmk:nerve-realization}
For any category $\calC$, the classifying space $|\calC|$ is a CW complex. The vertices of $|\calC|$ are the objects of $\calC$. The edges of $|\calC|$ are the non-identity morphisms of $\calC$. The 2-faces of $|\calC|$ correspond to equations $gf = h$; for such an equation the three boundary edges of the corresponding face are $g$, $f$, and $h$. Higher faces correspond to longer equations $f_n \cdots f_1 = h$; the faces of such a face are obtained by composing various sub-strings of the list $(f_1,\dots,f_n)$.

From a more conceptual perspective, the classifying space functor $|-|: \Cat \to \Top$ factors through the category of \defterm{simplicial sets}. Specifically, we have $|\calC| = |\nerve \calC|$ where $\nerve: \Cat \to \sSet$ is the \defterm{nerve} functor and $|-| : \sSet \to \Top$ is the \defterm{geometric realization} functor (note that we use the same notation for geometric realization as we do for classifying spaces). The object $\nerve \calC$ is a \defterm{simplicial set} whose set $\calC_n$ of length-$n$ chains of composable morphisms as in Definition \ref{def:classifying-space}. For more on nerves, geometric realization, and classifying spaces, see \cite{quillen}. For geometric realization specifically, see \cite[Chapter 3]{may-simplicial}, and \cite[Chapter IX.6]{cwm} for a conceptual definition using coends.

In fact, for homotopy theoretic purposes there is really no need to perform the second step of geometric realization: one can ``do homotopy theory" directly at the level of simplicial sets rather than topological spaces. We have chosen to state our results in terms of topological spaces because simplicial sets may be less familiar to model theorists, but the reader who is familiar with simplicial sets is welcome to interpret our results in that setting instead.
\end{rmk}

We would like to study the space $|\calC|$ homotopy-theoretically. To that end, let us recall some basic definitions.

\begin{Def}[Homotopy]\label{def:homotopy}
Let $X,Y$ be topological spaces and $f,g : X \rightrightarrows Y$ a pair of maps. A \defterm{homotopy} from $f$ to $g$ is a map $F: [0,1] \times X \to Y$ such that $F(0,x) = f(x)$ and $F(1,x) = g(x)$ for all $x \in X$. Two maps are said to be \defterm{homotopic} if there is a homotopy between them; this is a congruence relation on $\Top$. The category of CW complexes and homotopy classes of maps is called \defterm{the homotopy category}. A \defterm{homotopy equivalence} from $X$ to $Y$ consists of maps $f: X \to Y$, $g: Y \to X$, and homotopies $gf \sim 1$, $fg \sim 1$. In this case we say that $X$ and $Y$ are \defterm{homotopy equivalent}, and we write $X \simeq Y$ if $X$ and $Y$ are homotopy equivalent CW complexes. This is an equivalence relation on CW complexes, and we say that two CW complexes have the same \defterm{homotopy type} if they are homotopy equivalent.
\end{Def}

\begin{fact}\label{fact:catequiv} $~$
\begin{enumerate}
    \item A natural transformation $\alpha: F \Rightarrow G: \calC \to \calD$ induces a homotopy $|F| \sim |G|$.
    \item\label{propitem:catequiv} An equivalence of categories $F: \calC \to \calD$ induces a homotopy equivalence $|F|: |\calC| \simeq |\calD|$.    \item\label{fact:catequiv.item:adj} More generally, an adjunction $F: \calC \to \calD: G$ induces a homotopy equivalence $|\calC| \simeq |\calD|$.
    \item\label{fact:catequiv.item:minus} More generally still, if $F: \calC \to \calD : G$ are functors and there are natural transformations between $\id_\calC$ and $GF$ and $\id_\calD$ and $FG$ (the direction doesn't matter), then $|\calC| \simeq |\calD|$.
\end{enumerate}
\end{fact}
\begin{proof}
One may think of a natural transformation $\alpha: F \Rightarrow G: \calC \to \calD$ as a functor $[1] \times \calC \to \calD$, where $[1]$ is the ``arrow category," with two objects and one nonidentity morphism between them. Similarly, a homotopy between maps $X\rightrightarrows Y$ is a map $\Delta^1 \times X \to Y$. From this, and the fact that the classifying space functor preserves finite products (cf. Fact \ref{fact:finprod} below), $(1)$ follows. Then $(4)$ is immediate; $(2)$ and $(3)$ are consequences of $(4)$.
\end{proof}

We next consider the simplest type of space up to homotopy:

\begin{Def}[Contractible]
A topological space $X$ is \defterm{contractible} if it is homotopy equivalent to the 1-point space $\Delta^0$. A category $\calC$ is \defterm{contractible} if its classifying space $|\calC|$ is contractible. In particular, a groupoid is contractible if and only if it is trivial (cf. Recollection \ref{rec:gpd}).
\end{Def}

% \begin{rmk}
% Every space $X$ admits a unique map $X \to \Delta^0$, so $X$ is contractible if and only if this map is a homotopy equivalence. In other words, $X$ is contractible if and only if the identity map $\id_X: X \to X$ is homotopic to the constant map $\const_x : X \to X$ at some point $x \in X$.

% The classifying space of the terminal category $[0]$ (the category with one object and no nonidentity morphisms) is the one-point space $\Delta^0$. So a category $\calC$ is contractible if and only if the unique functor $\calC \to [0]$ induces a homotopy equivalence $|\calC| \to \Delta^0$.
% \end{rmk}

\begin{Def}[Fundamental Groupoid of a Topological Space]\label{def:fundgpd}
Let $X$ be a topological space. The \defterm{fundamental groupoid} $\Pi_1(X)$ is the groupoid with objects the points of $X$ and morphisms given by homotopy classes (relative to endpoints) of paths, with composition given by concatenation of paths; inverses are given by reversing paths. If $x \in X$ is a point, the \defterm{fundamental group} $\pi_1(X,x)$ is the group of automorphisms of $x$ considered as an object of $\Pi_1(X)$. When $X$ is path-connected and nonempty, $\pi_1(X,x)$ is independent of $x$ up to isomorphism, so we may refer to $\pi_1(X)$ loosely.
\end{Def}

\begin{fact}\label{fact:topfundgpd} $~$
\begin{enumerate}
    \item\label{fact:topfundgpd.item:conn} Let $\calC$ be a moderate category. Then $\calC$ is connected if and only if $|\calC|$ is connected.
    \item\label{fact:topfundgpd.item:equiv} Let $\calC$ be a moderate category. The natural functor $\Pi_1(\calC) \to \Pi_1(|\calC|)$ is an equivalence of groupoids.
    \item\label{fact:topfundgpd.item:equiv-equiv} A homotopy equivalence $f: X \simeq Y$ of topological spaces induces an equivalence of groupoids $f_\ast: \Pi_1(X) \simeq \Pi_1(Y)$, and for any $x \in X$ induces an isomorphism of fundamental groups $f_\ast: \pi_1(X,x) \cong \pi_1(Y,f(x))$.
    \item In particular, if $X$ is a contractible topological space, then $\Pi_1(X)$ is trivial.
\end{enumerate}
\end{fact}
\begin{proof}
$(1)$ follows directly from the definitions. For $(2)$, note that the fundamental group of $|\calC|$ may be computed from the cell structure of its definition using the van Kampen theorem, and one obtains precisely the presentation of $\Pi_1(\calC)$ found in \cite[Chapter I.1]{gabriel-zisman}. $(3)$ and $(4)$ may be deduced from the corresponding familiar facts for the fundamental group.
\end{proof}
 
\begin{eg}[Classifying Space of Group]\label{eg:kgn}
Let $G$ be a discrete group. We denote by $BG := |\bbB G|$ the classifying space of the groupoid $\bbB G$, called the \defterm{classifying space} of the group $G$. In topology, the space $BG$ is defined more generally for a topological group $G$, but in this paper we will only consider it for discrete groups. To emphasize this, we may write $BG^\delta$ in cases where $G$ may also carry a topology which we are ignoring.
\end{eg}
 
\begin{rmk}[Homotopy Groups, Weak Homotopy Equivalence]\label{rmk:kgn}
Let $X$ be a space and $x \in X$ a point. For each $n \in \nats$, the \defterm{$n$th homotopy set of $X$ based at $x$}, denoted $\pi_n(X,x)$, is defined to be the set of pointed homotopy equivalence classes of maps $S^n \to X$ sending the base-point to $x$, where $S^n$ is the $n$-sphere. For $n \geq 1$, $\pi_n(X,x)$ is a group (recovering the fundamental group for $n=1$) and for $n \geq 2$ it is abelian. As for the fundamental group, choices of base-point in the same path component yield isomorphic homotopy groups, so if $X$ is connected we may speak loosely of $\pi_n(X)$. The construction $\pi_n(X,x)$ is functorial in base-point-preserving maps. A \defterm{weak homotopy equivalence} is a map which induces bijections on all homotopy sets for all base-points. Every homotopy equivalence is a weak homotopy equivalence. Every space is weak homotopy equivalent to a CW complex, and Whitehead's theorem says that a map between CW complexes is a homotopy equivalence if and only if it is a weak homotopy equivalence. We write $X \simeq Y$ if there is a diagram of weak homotopy equivalences $X \to Z \leftarrow Y$, and say that $X$ and $Y$ are \defterm{weakly homotopy equivalent}. This is an equivalence relation on spaces \cite[Proposition 4.13 and Corollary 4.19]{hatcher}, and when $X,Y$ are CW complexes we have $X \simeq Y$ if and only if $X$ and $Y$ are homotopy equivalent, agreeing with the notation introduced in Definition \ref{def:homotopy}. The only spaces considered in this paper which are not CW complexes are certain infinite products of CW complexes in Section \ref{sec:examples}, so most of the time the distinction between homotopy equivalence and weak homotopy equivalence is not important for us.

We also say that a functor $F: \calC \to \calD$ is a \defterm{weak homotopy equivalence} if it induces a homotopy equivalence (equivalently: a weak homotopy equivalence) of classifying spaces. 
\end{rmk}

\begin{rmk}[Aspherical]\label{rmk:kgn2}
In general, there exist CW complexes with isomorphic homotopy groups which are not homotopy equivalent. However, there is a special case of interest: we say that $X$ is \defterm{aspherical} if $\pi_n(X,x) = 0$ for all base-points and all $n\geq 2$. It is a fact that if $X$ and $Y$ are aspherical CW complexes, then any equivalence of fundamental groupoids $\Pi_1(X) \simeq \Pi_1(Y)$ is induced by a homotopy equivalence $X \simeq Y$ \cite[Chapter 4.2]{hatcher}. In particular, if $X$ is aspherical, then $X \simeq |\Pi_1(X)|$, and in particular if $X$ is connected and aspherical, then $X \simeq B \pi_1(X)$.\footnote{There is a generalization to higher homotopy groups: if $n\geq 1$ and $A$ is a group (required to be abelian if $n \geq 2$), then there is a unique space up to homotopy equivalence, called the \defterm{$n$th Eilenberg-MacLane space} for the group $A$, and denoted $K(A,n)$ or $B^n A$, satisfying the property that $\pi_k(B^n A) = \begin{cases} A & n=k \\ 0 & \text{else} \end{cases}$.}
\end{rmk}

Deeper tools in homotopy theory include Quillen's celebrated theorems A and B.\footnote{We would like to stress that although Quillen's Theorem A and B are often considered relatively deep theorems in homotopy theory, in this paper their use is merely a convenience until Section \ref{sec:higher-hty}, and all of our results up to that point can be proved directly by more elementary methods.} For the notions of opfibrations, slice categories, and reindexing functors, see Recollection \ref{rec:fibration}.

\begin{thm}[Quillen's Theorem A {\cite[Theorem A and Corollary immediately following]{quillen}}]\label{thm:quillen-a}
Let $F: \calC \to \calD$ be a Grothendieck opfibration in $\Cat$. Suppose that for all $D \in \calD$, the fiber $F\inv(D)$ is contractible. Then $|F|: |\calC| \to |\calD|$ is a homotopy equivalence.

Let $F: \calC \to \calD$ be any functor in $\Cat$. Suppose that for all $D \in \calD$, the slice category $F \downarrow D$ is contractible. Then $|F|: |\calC| \to |\calD|$ is a homotopy equivalence.
\end{thm}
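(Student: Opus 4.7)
The plan is to reduce both statements to a single key lemma: if $G : \calX \to \calY$ is a Grothendieck fibration with contractible fibers, then $|G|$ is a homotopy equivalence. The first statement of Theorem A is literally this lemma, applied with $G = F$. The second (general) statement will then follow by introducing an auxiliary comma category to replace $F$ by an opfibration whose fibers are the slices $F \downarrow D$; the opfibration version of the key lemma is a formal consequence of the fibration version via the identification $|\calA| = |\calA^{\op}|$.

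For the second statement, form the comma category $\calE := F \downarrow \Id_\calD$, whose objects are triples $(C, D, f : F(C) \to D)$ and whose morphisms $(C, D, f) \to (C', D', f')$ are pairs $(u : C \to C', v : D \to D')$ satisfying $v \circ f = f' \circ F(u)$. Let $p : \calE \to \calC$ and $q : \calE \to \calD$ be the two projections. They are linked by a natural transformation $F p \Rightarrow q$ whose component at $(C, D, f)$ is $f$ itself, yielding via Proposition \ref{prop:catequiv} a homotopy $|F| \circ |p| \simeq |q|$. The functor $p$ is always a homotopy equivalence: the section $s : \calC \to \calE$, $C \mapsto (C, F(C), \id_{F(C)})$ satisfies $p \circ s = \Id_\calC$ strictly, and there is a natural transformation $s p \Rightarrow \Id_\calE$ with component $(\id_C, f) : (C, F(C), \id_{F(C)}) \to (C, D, f)$, so Proposition \ref{prop:catequiv} makes $|s|$ and $|p|$ mutually inverse. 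Meanwhile, $q$ is a Grothendieck opfibration with fiber $F \downarrow D$ over $D$ (the cocartesian lift of $v$ at $(C, D, f)$ is $(\id_C, v) : (C, D, f) \to (C, D', v \circ f)$), so the hypothesis combined with the key lemma gives that $|q|$ is a homotopy equivalence. Together with $|F p| \simeq |q|$ and the equivalence $|p|$, this forces $|F|$ to be a homotopy equivalence as well.

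The main obstacle is the key lemma itself, whose proof requires genuine homotopy-theoretic input beyond Proposition \ref{prop:catequiv}. The standard argument is bisimplicial: one presents the nerve of the total category as a bisimplicial set whose $(m, n)$-bisimplices consist of an $n$-chain in $\calY$ together with an $m$-chain in $\calX$ covering it, and observes that the contractibility of the fibers $G^{-1}(Y)$ produces, in one simplicial direction, a levelwise equivalence to the constant bisimplicial set on the nerve of $\calY$. The realization lemma --- a levelwise equivalence of bisimplicial sets induces a homotopy equivalence on geometric realization --- then yields that $|G|$ is a homotopy equivalence. Equivalently, Thomason's homotopy colimit theorem identifies $|\calX|$ with the homotopy colimit over $Y \in \calY$ of $|G^{-1}(Y)|$, which collapses to $|\calY|$ when every fiber is contractible. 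We refer to \cite{goerss_jardine} for the details.
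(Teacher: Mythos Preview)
The paper does not prove Theorem \ref{thm:quillen-a}; it is stated as a result of Quillen with a reference to \cite{quillen} and \cite{goerss_jardine}, and used as a black box throughout. So there is no ``paper's own proof'' to compare against.

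That said, your sketch is a correct outline of the standard argument. The reduction of the general statement to the (op)fibration case via the comma category $F \downarrow \Id_\calD$ is exactly how it is usually done, and your verification that $p$ is a homotopy equivalence and $q$ is an opfibration with fibers $F \downarrow D$ is accurate. For the key lemma you correctly identify that real homotopy-theoretic input is needed and point to the bisimplicial realization lemma or Thomason's homotopy colimit theorem; this is the standard route, and deferring the details to \cite{goerss_jardine} is appropriate given that the paper does the same.
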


\begin{thm}[Quillen's Theorem B {\cite[Theorem B and Corollary immediately following]{quillen}}, see also {\cite[Chapter IV.5]{goerss_jardine}}]\label{thm:quillen-b}
Let $F: \calC \to \calD$ be a Grothendieck opfibration in $\Cat$. Suppose that for every morphism $D \to D'$ in $\calD$, the reindexing functor $F\inv(D) \to F\inv(D')$ induces a homotopy equivalence of classifying spaces. Then the induced sequence $|F\inv(D)| \to |\calC| \to |\calD|$ is a homotopy fiber sequence for each $D \in \calD$.
% Let $F: \calC \to \calD$ be an arbitrary functor in $\Cat$. Suppose that for every morphism $D \to D'$ in $\calD$, the reindexing functor $F \downarrow D \to F \downarrow D'$ induces a homotopy equivalence of classifying spaces. Then the induced sequence $|F \downarrow D| \to |\calC| \to |\calD|$ is a homotopy fiber sequence for each $D \in \calD$.
\end{thm}

Because $|\calC| \cong |\calC^\op|$ naturally, these theorems also have dual forms. Theorem B also has a form using coslice categories analogous to Theorem A, but we will not need it.

In the statement of Theorem B, a \defterm{homotopy fiber sequence} is a sequence of maps $X \to Y \to Z$ such that the composite $X \to Z$ is constant and the induced square \[
\begin{tikzcd}[row sep = small, column sep = small]
X \ar[r] \ar[d] & Y \ar[d] \\
1 \ar[r] & Z
\end{tikzcd}\]
is \defterm{homotopy cartesian} in the sense of \cite[p. 96]{quillen}. For our purposes, it is not important to know precisely what this means. We simply need to know the following facts:

\begin{fact}\label{fact:qtb-cor}
Let $F: \calC \to \calD$ be a Grothendieck opfibration in $\Cat$, where $\calD$ is a groupoid. Then for any $D \in \calD$, the induced sequence $|F\inv(D)| \to |\calC| \to |\calD|$ is a homotopy fiber sequence.
\end{fact}
\begin{proof}
Any morphism in $\calD$ is an isomorphism, and so any cocartesian morphism of $\calC$ is an isomorphism. Thus the reindexing functors $F\inv(D) \to F\inv(D')$ are all equivalences of categories, and in particular induce homotopy equivalences of classifying spaces by Fact \ref{fact:catequiv}. Thus by Quillen's Theorem B (Theorem~\ref{thm:quillen-b}), the sequence $|F\inv(D)| \to |\calC| \to |\calD|$ is a homotopy fiber sequence.
\end{proof}

\begin{fact}[{\cite[Theorem 4.41]{hatcher}}]\label{fact:les}
Let $F \to E \to B$ a homotopy fiber sequence. Assume for simplicity that $F,E,B$ are path-connected, and choose a basepoint for $F$ (which maps forward to choices of basepoint for $E$ and $B$.) Then there is an induced long exact sequence of homotopy groups $ \dots \to \pi_n(F) \to \pi_n(E) \to \pi_n(B) \to \pi_{n-1}(F) \to  \dots$, which is natural with respect to basepoint-preserving maps of fiber sequences.
\end{fact}

\begin{fact}\label{fact:five-fiber}
Let
\begin{equation*}
\begin{tikzcd}
F \ar[r] \ar[d] & E \ar[r] \ar[d] & B \ar[d] \\
F' \ar[r] & E' \ar[r] & B'
\end{tikzcd}
\end{equation*}
be a commutative diagram of connected CW complexes, and suppose that both rows are homotopy fiber sequences. Then if any two of the downward maps are homotopy equivalences, so is the third.
\end{fact}
\begin{proof}
After choosing a basepoint for $F$ and mapping it forward to basepoints for the 5 other spaces, Fact \ref{fact:les} supplies two long exact sequences homotopy groups with natural maps between them. The hypotheses imply that all but every third map is an isomorphism. Therefore by the five lemma, all the maps are isomorphisms. By Whitehead's theorem (cf. Remark \ref{rmk:kgn}), all three maps are homotopy equivalences.
\end{proof}

\begin{fact}\label{fact:ses}
Let $1 \to G \to H \to K \to 1$ be a short exact sequence of groups. Then $BG \to BH \to BK$ is a homotopy fiber sequence.
\end{fact}
\begin{proof}
The functor $\bbB H\to \bbB K$ is a Grothendieck opfibration because both $\bbB H$ and $\bbB K$ are groupoids and the functor is full and surjective on objects. Then this follows from from Fact \ref{fact:qtb-cor}.
\end{proof}

The following statements are useful tools when computing examples in Section \ref{sec:examples}, and go back at least to \cite{quillen}.

 \begin{prop}\label{prop:contractible}
Let $\calC$ be a category. Then $\calC$ is contractible in all of the following cases:
\begin{enumerate}
    \item\label{prop:contractible.item:initial} $\calC$ has an initial or terminal object.
    % \item\label{prop:contractible.item:coprod} $\calC$ is nonempty and has binary products or binary coproducts.
    \item\label{prop:contractible.item:slice} In particular, if $\calC = \calC'\downarrow C$ is a slice category.
    \item\label{prop:contractible.item:almostcoprod} $\calC$ is nonempty and admits a functor $F: \calC \times \calC \to \calC$ and natural transformations $\iota_1: \pi_1 \Rightarrow F$ and $\iota_2: \pi_2 \Rightarrow F$, where $\pi_1,\pi_2: \calC \times \calC \to \calC$ are the projection functors.
\end{enumerate}
\end{prop}
\begin{proof}
\begin{enumerate}
    \item Suppose that $\calC$ has an initial object. Then the unique functor $\calC \to 1$ to the terminal category (cf. Example \ref{eg:colim}) has a left adjoint, given by the inclusion of the initial object. Then by Fact \ref{fact:catequiv}(\ref{fact:catequiv.item:adj}), $| \calC|$ is contractible. The case of a terminal object is dual.
    \item If $\calC = \calC'\downarrow C$ is a slice category, then $(C,\id_C)$ is a terminal object in $\calC$.
    \item Suppose that $\calC$ is nonempty, pick some $C_0 \in \calC$, and define a functor $F(C_0,-): \calC \to \calC$, $C \mapsto F(C_0, C)$. There is a natural transformation $\id_\calC \Rightarrow F(C_0,-)$ whose components are given by $\iota_2: C \to F(C_0, C)$. There is also a natural transformation $\iota_1: \const_{C_0} \Rightarrow F(C_0,-)$ from the constant functor at $C_0$ to $F(C_0,-)$. By transitivity of the homotopy relation, $|\id_\calC|$ is homotopic to $|\const_{C_0}|$, and so $\calC$ is contractible.\qedhere
\end{enumerate}
\end{proof}

\begin{rmk}\label{rmk:skolem}
Note that $\Mod(T)$ has a terminal object only when $T$ has only one finite model without nontrivial automorphisms. And $\Mod(T)$ has an initial object if and only if the definable closure of the empty set is a model (this is much stronger than the existence of a prime model, for example ACF has a prime model but no initial model).
\end{rmk}

\begin{Def}\label{def:fun-jn}
In the situation of Proposition \ref{prop:contractible}(\ref{prop:contractible.item:almostcoprod}) we will say that $\calC$ has \defterm{functorial joint embedding}. A motivating example comes when $F: \calC \times \calC \to \calC$ is a binary coproduct functor.
\end{Def}

We close this preliminary section with a few more miscellaneous facts we will need later.

\begin{fact}[{\cite[Proposition 5.7]{thomason}}]\label{fact:poset}
Let $X$ be a CW complex. Then $X$ is homotopy equivalent to the classifying space of a poset.
\end{fact}
\begin{proof}
As shown in \cite{thomason}, there is a \defterm{model structure} (the \emph{Thomason model structure}) in the sense of \cite{quillen-homotopical} on $\Cat$ where the \defterm{weak equivalences} of the model category structure are those functors which are carried to homotopy equivalences by the classifying space functor $|-| : \Cat \to \Top$. Moreover the classifying space functor is a \emph{Quillen equivalence}. In the Thomason model structure every cofibrant object is a poset. Direct from the definition of a model category, every object is weakly equivalent to a cofibrant object. Quillen equivalences induce bijections of weak equivalence classes of objects. Every topological space is weakly equivalent to CW complex.
\end{proof}

\begin{fact}[cf. {\cite{quillen-homotopical}}]\label{fact:po-contr}
A pushout of contractible spaces along injective cellular maps is contractible.
\end{fact}
\begin{proof}
% In \cite{quillen-homotopical} it is shown that there is a \defterm{model structure} in the sense of \cite{quillen-homotopical} on $\Top$ where among the \defterm{cofibrations} of the model structure are the injective cellular maps. It is moreover shown in \cite{quillen-homotopical} that this model structure is \defterm{left proper}, from which the fact follows.
The fundamental group and homology of the pushout may be computed using the van Kampen theorem and the usual Mayer-Vietoris sequence for homology, and the conclusion then follows from the Homology Whitehead Theorem (which says that a map between simply-connected spaces inducing an isomorphism on integral homology is a weak homotopy equivalence -- see \cite{may-whitehead} for a nice proof).
\end{proof}

\begin{fact}\label{fact:fil-whe}
(Weak) homotopy equivalences of geometric realizations of simplicial sets are stable under filtered colimits of simplicial maps. In particular, if $\calC = \varinjlim_i \calC_i$ is a filtered colimit of categories $\calC_i$ with contractible classifying spaces, then $\calC$ also has a contractible classifying space.
\end{fact}
\begin{proof}
This is recovered from the stronger results of \cite{raptis-rosicky} or \cite{barnea-schlank}, or alternatively may be deduced from the classical theory of Kan's $\mathrm{Ex}^\infty$ functor \cite{kan}.
\end{proof}

\begin{fact}\label{fact:geom-colim}
The geometric realization functor $|-|: \sSet \to \Top$ preserves colimits.
\end{fact}
\begin{proof}
In fact, the geometric realization functor is a left adjoint \cite[Proposition I.2.2]{goerss_jardine}.
\end{proof}

\begin{fact}\label{fact:finprod}
Let $\calC, \calD$ be categories. Then the canonical continuous bijection $|\calC \times \calD| \to |\calC| \times |\calD|$ is a weak homotopy equivalence. Moreover, if $\calC$ or $\calD$ is finite, then this map is a homeomorphism.\footnote{In practice, for homotopy theorists today the classifying space functor is usually modified to take values, not in $\Top$, but in a convenient category of topological spaces \cite{convenient} such as the category of \emph{compactly-generated weak Hausdorff spaces}. With this modification, the map $|\calC \times \calD| \to |\calC| \times |\calD|$ becomes a homeomorphism for all categories $\calC, \calD$, so that we may simply say that $|-|$ preserves finite products. We have endeavored to suppress reliance on modern tools such as simplicial sets and convenient categories of spaces for the convenience of the reader who is not a topologist.}
\end{fact}
\begin{proof}
More generally, if $X,Y$ are simplicial sets (cf. Remark \ref{rmk:nerve-realization}), then (i) the continuous bijection $|X \times Y| \to |X| \times |Y|$ is a weak homotopy equivalence, and (ii) this map is a homeomorphism if either $X$ or $Y$ is \defterm{locally finite}, meaning that it has finitely many simplices in each degree (note that the nerve of a finite category is locally finite). The proof of (ii) is due to \cite[Theorem 2]{milnor}. For (i), observe that if $S^k \to |X| \times |Y|$ represents an element of $\pi_k(|X|\times |Y|)$, then because $S^k$ is compact, the projection $S^k \to |X| \times |Y| \to |X|$ factors through a finite subcomplex $|X'| \subseteq |X|$. Similarly, the other projection factors through a finite subcomplex $|Y'| \subseteq |Y|$. Thus the whole map factors through $|X'| \times |Y'|$ which, by (i), is homeomorphic to $|X' \times Y'|$. Since $|X' \times Y'|$ is a subcomplex of $|X \times Y|$, we obtain a lift $S^k \to |X \times Y|$, showing that the map $\pi_k(|X \times Y|) \to \pi_k(|X| \times |Y|)$ is surjective. Similarly, we may lift any nullhomotopy $D^k \to |X| \times |Y|$ to a nullhomotopy $D^k \to |X \times Y|$, so that the map $\pi_k(|X \times Y|) \to \pi_k(|X| \times |Y|)$ is injective. Thus the map $\pi_k(|X \times Y|) \to \pi_k(|X| \times |Y|)$ is bijective for all $k$, and so $|X \times Y| \to |X| \times |Y|$ is a weak homotopy equivalence. 
\end{proof}

\begin{fact}\label{fact:prodgpd}
The geometric realization functor commutes with arbitrary products of groupoids up to weak homotopy equivalence. The functor $\bbB : \Grp \to \Gpd$ also commutes with arbitrary products.
\end{fact}
\begin{proof}
The second statement is trivial. For the first, observe that homotopy groups of spaces commute with infinite products and groupoids are closed under products in $\Cat$, so that $\Pi_1(\prod |\Gamma_i|) = \prod \Pi_1(|\Gamma_i|) = \prod \Gamma_i = \Pi_1(|\prod \Gamma_i|)$ and all other homotopy groups vanish.
\end{proof}

%%%%%%%%%%%%%%%%%%%%%%%%%%%%%%%%%%%%%%%%%%%%%%%%%%%%%%%%
%%% SECTION 3: BACKGROUND IN MODEL THEORY
%%%%%%%%%%%%%%%%%%%%%%%%%%%%%%%%%%%%%%%%%%%%%%%%%%%%%%%%

\section{Model theoretic preliminaries}\label{sec:mod-bkgd}
In this section, we introduce the notion of a the Lascar group of a theory. Most of the materials can be found in  \cite{casanovas2001galois} and \cite{zie}. For basic notions in model theory, refer to \cite{tentziegler}. We fix some conventions for this section.
\begin{itemize}
     \item $T$ is a complete first-order theory in some language $L$.
     \item $\kappa$ is a infinite cardinal, $\kappa > |T|$.
     \item $\Mod(T)$ is the category of models of $T$, with elementary embeddings as morphisms. Recall that in this paper, all the models are assumed to be small, see Section \ref{sec:set-con}.
     \item $\Mod_\kappa(T)$ is the full subcategory of \defterm{$\kappa$-small} models of $T$, i.e. models of cardinality $<\kappa$.
 \end{itemize}
 \begin{Def}\label{def:univ-homog-etc}
 Let $\bbU \in \Mod(T)$ be a model of $T$.
 \begin{enumerate}
    \item $\bbU$ is \defterm{$\kappa$-universal} if for every $M \in \Mod_\kappa(T)$, there exists a morphism $M \to \bbU$ in $\Mod(T)$.
    \item $\bbU$ is \defterm{strongly $\kappa$-homogeneous} if for every $M \in \Mod_\kappa(T)$ and every pair of morphisms \begin{tikzcd} f,g: M \ar[r,shift left] \ar[r,shift right] & \bbU \end{tikzcd} there exists an \emph{automorphism} $\alpha: \bbU \to \bbU$ such that $\alpha f = g$.
    \item $\bbU$ is \defterm{$\kappa$-saturated} if for every type  $p \in S_n(A)$, where $|A|<\kappa$ and $|A|\subseteq \bbU$, there is $a \in \bbU$ such that $a \models p$. Note that $\kappa$-saturated implies $\kappa$-universal.
    \item $\bbU$ is a \defterm{monster model} if $\bbU$ is $\kappa$-saturated and strongly $\kappa$-homogeneous. Note that monster models exists for any theory $T$ and any $\kappa$, see \cite[Chapter 5.1]{Chang&Keisler} or \cite[Chapter 6.1]{tentziegler} for details.
    %\item In a $\kappa$-accessible category, we say
 \end{enumerate}
 \end{Def}
 \begin{rmk}\label{rmk:univ-homog}
 Note that the definition of $\kappa$-universal/strongly $\kappa$-homogeneous makes sense even in an accessible category. In an accessible category $\calC$, we say that an object $\bbU$ is $\kappa$-universal if for any $\kappa$-presentable $M$, there is a morphism $M\to \bbU$ and we say it is strongly $\kappa$-homogeneous if if for every $\kappa$-presentable $M$ and every pair of morphisms \begin{tikzcd} f,g: M \ar[r,shift left] \ar[r,shift right] & \bbU \end{tikzcd} there exists an \emph{automorphism} $\alpha: \bbU \to \bbU$ such that $\alpha f = g$.
 \end{rmk}
 \begin{rmk} \label{rmk:term-homog}
 Our notion of \emph{strong $\kappa$-homogeneity} might more precisely be called \emph{strong model $\kappa$-homogeneity} since we do not ask for homogeneity over arbitrary subsets. However, for a $\kappa$-universal model $M$, the two notions above coincide. We thank the referee for the following proof.
 \begin{proof}
 Given two subsets $A_1,A_2\subset M$ that are isomorphic. We need to find $\sigma\in \Aut(M)$ such that $\sigma(A_1)=A_2$. By Downwards L\"owenheim-Skolem, one can find $M_i\preceq M$ such that $A_i\subseteq M_i$ for $i=1,2$. Take $N$ to be a small model (not necessarily contained in $M$) containing $M_1,M_2$ such that $A_1,A_2$ are conjugates in $N$. By $\kappa$-universality of $M$, let $N'$ denote an embedded copy of $N$ in $M$, we call the image of $M_1,M_2$ under this embedding $M'_1,M'_2$. Similarly, we denote the image of $A_1,A_2$ by $A'_1,A'_2$. By strong model $\kappa$-homogeneity of $M$, we have $\sigma_1,\sigma_2,\sigma_3\in \Aut(M)$ such that $\sigma_1(M_1)=M'_1$,  $\sigma_2$ fixes $N'$ set-wise and sends $\sigma_2(A'_1)=A'_2$ and $\sigma_3(M_2')=M_2$. Then for $\sigma=\sigma_3\sigma_2\sigma_1$, we have $\sigma(A_1)=\sigma_3\sigma_2\sigma_1(A_1)=A_2$.
 \end{proof}
 \end{rmk}
 
 \begin{Def}\label{def:lascar-group}
 Let $A\subseteq U$ be a set of parameters, where $U$ is a model of $T$, and $|A|<\kappa$ as in the previous definition. We define
 \begin{align*}
      \Lst(T,A,U)=\left\langle \Aut(U/N): A\subseteq N\prec U,|N|<\kappa \right\rangle
 \end{align*}
i.e. the group generated by the groups $\Aut(U/N)$, the automorphisms of $U$ that stabilize $N$ pointwise, where $N$ ranges over small elementary substructures of $U$ which contain $A$. It is immediate from the definition that the above group is a normal subgroup of $\Aut(U/A)$, the pointwise stabilizer of $A$. The \defterm{Lascar Group of $T$ over $A$ based at $U$} is defined to be 
\begin{align}
    \Gal_L(T,U/A):=\Aut(U/A)/\Lst(T,A,U).
\end{align}
For $a,b\in U$, we write $\Lstp(a/A)=\Lstp(b/A)$ if there is some $\sigma\in \Lst(T,A,U)$ such that $\sigma(a)=b$. When $A=\emptyset$, we write $\Gal_L(T,U)$ to mean $\Gal_L(T,U/\emptyset)$. In the language of Definition \ref{def:lascar-start}, $\Gal_L(T,U)=\Gal_L(\Mod(T),\Mod_\kappa(T),U)$. Our definition agrees with the classical definition when $U$ is sufficiently homogeneous and saturated.
 \end{Def}

 \begin{rmk}
 It is well-known that $\Gal_L(T,U)$ is independent of the choice of $U$ up to isomorphism so long as certain conditions on $U$ are met. In particular, although stronger hypotheses on $U$ were originally required in \cite{lascar}, it is shown in \cite[Theorem 4.3]{resplendent} that $\Gal_L(T,U)$ is independent of $T$ up to isomorphism so long as $U$ is $|T|^+$-universal and strongly $|T|^+$-homogeneous. We will denote this group by $\Gal_L(T)$, and it is also easy to see that $\Gal_L(T)$ has cardinality at most $2^{|T|}$. We will provide new proofs of the above facts in the course of Section \ref{sec:lascfund}, Corollary \ref{cor:lascar-bounded} and Corollary \ref{cor:lascar-size}.
 \end{rmk}
 
 \begin{rmk}
 Note that the Lascar group has a different definition via the Lascar strong types. For each sort $S$, there is a finest bounded invariant equivalence relation $E^S_L$ on the sort $S$. Take $\bbU$ to be a monster model, then the Lascar strong automorphisms $\Lst(\bbU)$ are the ones that fixes each such class of $E^S_L$, and $\Gal_L(\bbU)=\Aut(\bbU)/\Lst(\bbU)$. However, in this paper, we use Definition \ref{def:lascar-group}.
 \end{rmk}
 
%%%%%%%%%%%%%%%%%%%%%%%%%%%%%%%%%%%%%%%%%%%%%%%%%%%%%%%%
%%% SECTION 4: MAIN THEOREM
%%%%%%%%%%%%%%%%%%%%%%%%%%%%%%%%%%%%%%%%%%%%%%%%%%%%%%%%

\section{The Lascar Group as a Fundamental Group}\label{sec:lascfund}
In this section, let $T$ be a complete first-order theory, let $\kappa >|T| $ be a regular cardinal, and let $\bbU \in \Mod
(T)$ be a model which is $\kappa$-universal and strongly $\kappa$-homogeneous. The results of this section extend, with the same proofs, to any $\kappa$-accessible category containing a $\kappa$-universal, strongly $\kappa$-homogeneous object in the sense of Remark~\ref{rmk:univ-homog}. We need the following facts.

 \begin{prop}\label{prop:smallequiv}$~$
 \begin{enumerate}
    \item\label{prop:smallequiv.item:zero} Let $M \in \Mod(T)$. Then the slice category $\Mod_\kappa(T) \downarrow M$ is $\kappa$-filtered.
     \item\label{prop:smallequiv.item:one} The inclusion $\Mod_\kappa(T) \to \Mod(T)$ induces a homotopy equivalence $|\Mod_\kappa(T)|\allowbreak \to |\Mod(T)|$.
     \item\label{prop:smallequiv.item:two} $\Mod(T)$ and $\Mod_\kappa(T)$ are connected.
     \item\label{prop:smallequiv.item:three} The homomorphism of groupoids induced by inclusion of $\Mod_\kappa(T)\to \Mod(T)$, $\Pi_1( \Mod_\kappa(T)) \to \Pi_1(\Mod(T))$ is an equivalence.
 \end{enumerate}
 \end{prop}
 \begin{proof}
    Note that (\ref{prop:smallequiv.item:zero}) follows from the Downward L\"owenheim-Skolem Theorem. So by Fact \ref{fact:fil-whe}, $\Mod_\kappa(T) \downarrow M$ is contractible. By Quillen's Theorem A (Theorem \ref{thm:quillen-a}), the map $| \Mod_\kappa(T)| \to | \Mod(T)|$ is a homotopy equivalence, yielding (\ref{prop:smallequiv.item:one}). (\ref{prop:smallequiv.item:two}) follows from the joint embedding property (\ref{eg:modt-jep-ap}). (\ref{prop:smallequiv.item:three}) follows from (\ref{prop:smallequiv.item:one}) by Fact \ref{fact:topfundgpd}(\ref{fact:topfundgpd.item:equiv-equiv}).
  \end{proof}
 
 \begin{rmk}\label{rmk:ind-eq}
 Proposition \ref{prop:smallequiv}(\ref{prop:smallequiv.item:zero}), (\ref{prop:smallequiv.item:one}), and (\ref{prop:smallequiv.item:three}) continue to hold in greater generality: in place of $\Mod(T)$ we may take any connected $\kappa$-accessible category $\calC$, while in place of $\Mod_\kappa(T)$ we may take the full subcategory $\calC_\kappa$ of $\kappa$-presentable objects. In the case of (\ref{prop:smallequiv.item:zero}), see for example \cite[Proposition 2.8(ii)]{lpacc}. In the case of (\ref{prop:smallequiv.item:one}) and (\ref{prop:smallequiv.item:three}) the argument is essentially the same as above.
 \end{rmk} 

 \begin{prop}\label{prop:can-hom}
 For any $U \in \Mod(T)$, there is a canonical homomorphism $\phi:\Gal_L(T, U) \to \allowbreak \pi_1(\Mod(T), \allowbreak U)$ sending $[\alpha] \mapsto \llbracket \alpha \rrbracket$.
 \end{prop}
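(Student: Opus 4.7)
The plan is to construct $\phi$ in two steps: first define a group homomorphism $\tilde\phi : \Aut_{\Mod(T)}(U) \to \pi_1(\Mod(T), U)$ at the level of automorphism groups in the category, then verify that it annihilates the subgroup $\Lst(\emptyset, U)$, so that it descends to the quotient $\Gal_L(T, U)$.

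For the first step, I would invoke Definition \ref{def:fundgpd} together with Proposition \ref{prop:fundgpd}(\ref{prop:fundgpd.item:equiv}) to identify $\pi_1(\Mod(T), U)$ with the automorphism group of $U$ in the fundamental groupoid $\Pi_1(\Mod(T))$. The unit $\llbracket - \rrbracket : \Mod(T) \to \Pi_1(\Mod(T))$ of the adjunction $\Pi_1 \dashv i$ (see Section \ref{sec:cat-bkgd}) is a functor, and so its action on automorphism groups gives a group homomorphism
\[
\tilde\phi : \Aut_{\Mod(T)}(U) \longrightarrow \Aut_{\Pi_1(\Mod(T))}(U) = \pi_1(\Mod(T), U),\quad \alpha \longmapsto \llbracket \alpha \rrbracket.
\]

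For the second step, it suffices to show that each generator of $\Lst(\emptyset, U)$ lies in $\ker \tilde\phi$. So suppose $\alpha \in \Aut_{\Mod(T)}(U)$ is fixed by some morphism $f : M \to U$ with $M \in \Mod_\kappa(T)$, i.e.\ $\alpha f = f$ in $\Mod(T)$. Applying the functor $\llbracket - \rrbracket$ gives the equation $\llbracket \alpha \rrbracket \llbracket f \rrbracket = \llbracket f \rrbracket$ in $\Pi_1(\Mod(T))$. Since every morphism in a groupoid is invertible, $\llbracket f \rrbracket$ admits an inverse, and right-cancelling yields $\llbracket \alpha \rrbracket = \id_U$ in $\pi_1(\Mod(T), U)$.

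Thus $\Lst(\emptyset, U) \subseteq \ker \tilde\phi$, and $\tilde\phi$ factors through the quotient to give the desired homomorphism $\phi : \Gal_L(T, U) \to \pi_1(\Mod(T), U)$ with $[\alpha] \mapsto \llbracket \alpha \rrbracket$. There is no real obstacle here: the whole content of the proof is the observation that a fixing morphism becomes an invertible witness to triviality once we pass to the fundamental groupoid, which is a one-line cancellation. The only subtlety worth flagging is the canonical identification $\pi_1(\Mod(T), U) \cong \Aut_{\Pi_1(\Mod(T))}(U)$, which is precisely the content of Proposition \ref{prop:fundgpd}(\ref{prop:fundgpd.item:equiv}) applied to the connected category $\Mod(T)$ (connectedness supplied by Proposition \ref{prop:smallequiv}(\ref{prop:smallequiv.item:two})).
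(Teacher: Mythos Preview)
Your proof is correct and follows essentially the same approach as the paper: define the homomorphism $\Aut(U) \to \pi_1(\Mod(T),U)$ via $\alpha \mapsto \llbracket \alpha \rrbracket$, then kill the generators of $\Lst(\emptyset,U)$ by cancelling the now-invertible $\llbracket f \rrbracket$ from the equation $\llbracket \alpha \rrbracket \llbracket f \rrbracket = \llbracket f \rrbracket$. One small remark: the identification $\pi_1(\Mod(T),U) = \Aut_{\Pi_1(\Mod(T))}(U)$ is already the definition of $\pi_1(\calC,C)$ in Definition~\ref{def:fundgpd}, so you do not need to invoke Proposition~\ref{prop:fundgpd}(\ref{prop:fundgpd.item:equiv}) or connectedness for this step.
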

 In the statement of the proposition, we have $\alpha \in \Aut(U)$. The notation $[\alpha]$ denotes the class of $\alpha$ in $\Gal_L(T,U)$. The notation $\llbracket \alpha \rrbracket$, introduced in Recollection \ref{rec:catfund}, denotes the class of $\alpha$ in $\Pi_1(\Mod(T))$, and we are abusively identifying $\pi_1(\Mod(T),U)$ with its image under the inclusion $\bbB \pi_1(\Mod(T),U) \subseteq \Pi_1(\Mod(T))$.
 \begin{proof}
 The map $\phi:\alpha \mapsto \llbracket \alpha\rrbracket$ clearly defines a homomorphism $\Aut(U) \to \allowbreak \pi_1(\Mod(T), \allowbreak U)$; we just have to show that this descends through the quotient map $\Aut(U) \to \Gal_L(T,U)$ defining the Lascar group. For this, it suffices to show that if $\alpha$ is an automorphism of $U$ fixing a $\kappa$-small elementary sub-model $M \subseteq U$, then $\llbracket \alpha \rrbracket$ is trivial in $\pi_1(\Mod(T),U)$. This is true because if $i: M \to U$ is the inclusion, we have $\alpha  i = i$. This equation still holds in $\Pi_1(\Mod(T))$, but here $\llbracket i \rrbracket$ is invertible, more explicitly, $\llbracket \id \rrbracket = \llbracket i \rrbracket  \llbracket i \rrbracket \inv = \llbracket \alpha i \rrbracket \llbracket i \rrbracket\inv = \llbracket \alpha \rrbracket  \llbracket i \rrbracket  \llbracket i \rrbracket \inv = \llbracket \alpha \rrbracket$.
 \end{proof}

\begin{thm}\label{thm:mainthm}
The Lascar group $\Gal_L(T)$ is isomorphic to the fundamental group $\pi_1(\Mod(T))$. Specifically, if $\bbU$ is $\kappa$-universal and strongly $\kappa$-homogeneous, then the homomorphism $\phi: \Gal_L(T,\bbU) \to \pi_1(\Mod(T),\bbU)$ of Proposition \ref{prop:can-hom} is an isomorphism.
\end{thm}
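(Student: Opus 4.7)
The strategy is to build an explicit two-sided inverse $\psi$ to $\phi$ via a classifying functor $F : \Mod_\kappa(T) \to \bbB \Gal_L(T,\bbU)$. Using $\kappa$-universality, fix for each $M \in \Mod_\kappa(T)$ an elementary embedding $i_M : M \to \bbU$. For a morphism $f : M \to N$ in $\Mod_\kappa(T)$, strong $\kappa$-homogeneity applied to the pair $i_N f, i_M : M \to \bbU$ yields some $\alpha \in \Aut(\bbU)$ with $\alpha i_M = i_N f$; set $F(f) := [\alpha]$. This is well-defined: if $\alpha' i_M = i_N f$ also, then $\alpha\inv\alpha'$ fixes $i_M$, so $\alpha\inv\alpha' \in \Lst$ by the very definition of the Lascar subgroup, forcing $[\alpha]=[\alpha']$. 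Functoriality is immediate. Picking a basepoint $M_0 \in \Mod_\kappa(T)$ and combining $\pi_1(F)$ with the isomorphism $(i_{M_0})_* : \pi_1(\Mod_\kappa(T),M_0) \cong \pi_1(\Mod(T),\bbU)$ of Proposition \ref{prop:smallequiv}(\ref{prop:smallequiv.item:i^*}), one obtains the candidate inverse $\psi := \pi_1(F) \circ (i_{M_0})_*\inv$.

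\textbf{Injectivity ($\psi\phi = \id$).} Fix $\alpha \in \Aut(\bbU)$. By downward Löwenheim--Skolem, pick a small $N \prec \bbU$ containing $i_{M_0}(M_0)\cup \alpha i_{M_0}(M_0)$, arranged so that $i_N$ is the literal inclusion, and let $j_1, j_2 : M_0 \to N$ be the factorizations of $i_{M_0}$ and $\alpha i_{M_0}$. Unwinding the defining equations of $F$ yields $F(j_1) = 1$ and $F(j_2) = [\alpha]$, while in $\Pi_1(\Mod(T))$ the identities $i_N j_1 = i_{M_0}$ and $i_N j_2 = \alpha i_{M_0}$ collapse to
\[
\llbracket j_1\rrbracket\inv \llbracket j_2\rrbracket \;=\; \llbracket i_{M_0}\rrbracket\inv \llbracket\alpha\rrbracket \llbracket i_{M_0}\rrbracket \;=\; (i_{M_0})_*\inv(\llbracket\alpha\rrbracket).
\]
Hence $\psi(\phi([\alpha])) = F(j_1)\inv F(j_2) = [\alpha]$.

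\textbf{Surjectivity (the main step).} An arbitrary $\gamma \in \pi_1(\Mod(T),\bbU)$ equals $(i_{M_0})_*(\gamma')$ for some $\gamma' \in \pi_1(\Mod_\kappa(T),M_0)$, represented by a zigzag
\[
M_0 = N_0 \;\xrightarrow{e_1^{\epsilon_1}}\; N_1 \;\xrightarrow{e_2^{\epsilon_2}}\; \cdots \;\xrightarrow{e_n^{\epsilon_n}}\; N_n = M_0
\]
through small models. For each forward edge $e_j : N_{j-1} \to N_j$, use strong homogeneity to pick $\alpha_j \in \Aut(\bbU)$ with $\alpha_j i_{N_{j-1}} = i_{N_j} e_j$, with the analogous convention for backward edges. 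Each directed edge then satisfies $\llbracket e_j\rrbracket^{\epsilon_j} = \llbracket i_{N_j}\rrbracket\inv \llbracket\alpha_j\rrbracket^{\delta_j} \llbracket i_{N_{j-1}}\rrbracket$ in $\Pi_1(\Mod(T))$ for a suitable $\delta_j \in \{\pm 1\}$. When the $n$ factors are composed, the intermediate $\llbracket i_{N_j}\rrbracket$-terms telescope, leaving
\[
\gamma' \;=\; \llbracket i_{M_0}\rrbracket\inv \llbracket \alpha\rrbracket \llbracket i_{M_0}\rrbracket, \qquad \alpha := \alpha_n^{\delta_n}\cdots\alpha_1^{\delta_1} \in \Aut(\bbU),
\]
so $\gamma = \llbracket\alpha\rrbracket = \phi([\alpha])$.

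\textbf{Main obstacle.} The most delicate point is the well-definedness of $F$: the definition of $\Lst$ is invoked precisely to absorb the ambiguity in choosing the lift $\alpha$, and strong $\kappa$-homogeneity (rather than merely universality) is essential for the lift to exist in the first place. Once $F$ is in hand, both injectivity and surjectivity reduce to telescoping computations in $\Pi_1(\Mod(T))$ combined with downward Löwenheim--Skolem.
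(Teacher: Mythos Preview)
Your proof is correct and follows essentially the same approach as the paper: both construct the identical ``classifying functor'' $\Mod_\kappa(T) \to \bbB\Gal_L(T,\bbU)$ by fixing embeddings $i_M$ via $\kappa$-universality and lifting morphisms via strong $\kappa$-homogeneity, with well-definedness absorbed by $\Lst$. The only difference is organizational: the paper packages the two verifications as showing that the induced groupoid map $\bbB\Gal_L(T) \to \Pi_1(\Mod_\kappa(T))$ has a weak inverse (using a natural transformation for one direction), whereas you check injectivity and surjectivity of $\phi$ directly; the underlying computations---downward L\"owenheim--Skolem to produce the intermediate model $N$, and the telescoping of $\llbracket i_{N_j}\rrbracket$-terms along a zigzag---are the same.
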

\begin{proof}
The inclusion $F: \Pi_1(\Mod_\kappa(T)) \to \Pi_1(\Mod(T))$ is an equivalence by Proposition \ref{prop:smallequiv}. Let $G: \Pi_1(\Mod(T)) \to \Pi_1(\Mod_\kappa(T))$ be a weak inverse to $F$. Following the general construction of weak inverses to equivalences of categories from Recollection \ref{rec:eqcats}, $G$ depends on an arbitrary choice, for each $V \in \Mod(T)$, of an object $G(V) \in \Mod_\kappa(T)$ and an isomorphism $i_V: F(G(V)) \to V$ in $\Pi_1(\Mod(T))$. For convenience, we assume that we have chosen $G(\bbU)$ to be a $\kappa$-small sub-model $M_\ast$ with inclusion map $i_{M_\ast}: M_\ast \subseteq \bbU$ and that we have chosen $i_\bbU = \llbracket i_{M_\ast} \rrbracket$.

We define a functor $\tilde\phi: \bbB \Gal_L(T,\bbU) \to \Pi_1(\Mod_\kappa(T))$ to be the following composite:
% \[\tilde \phi: \bbB \Gal_L(T,\bbU) \overset{\bbB \phi}{\to}
% \bbB \pi_1(\Mod(T), \bbU) \overset{\bbB i_*\inv}{\cong}
% \bbB \pi_1(\Mod_\kappa(T),M_*) \overset{\sim}{\to} \Pi_1(\Mod_\kappa(T))\]
% The first map is induced by $\phi$, the isomorphism $i_*\inv$ comes from Proposition \ref{prop:smallequiv}(\ref{prop:smallequiv.item:i^*}), and the final map is the canonical inclusion map $\bbB\pi_1(\Mod_\kappa(T),M_*) \allowbreak \to \allowbreak \Pi_1(\Mod_\kappa(T))$; note that this map is an equivalence of groupoids because $\Pi_1(\Mod_\kappa(T))$ is connected (Recollection \ref{rec:conngpd}). The claim of the theorem, that $\phi$ is a group isomorphism, is equivalent to the claim that $\bbB \phi$ is an equivalence of groupoids. Moreover, $\tilde \phi$ is the composite of $\bbB \phi$ with an equivalence of groupoids, so this in turn is equivalent to the claim that $\tilde \phi$ is an equivalence of groupoids. We will prove this by constructing a weak inverse explicitly.
\[\tilde \phi: \bbB \Gal_L(T,\bbU) \overset{\bbB \phi}{\to}
\bbB \pi_1(\Mod(T), \bbU) \overset{\sim}{\to} 
\Pi_1(\Mod(T)) \overset{\overset{G}{\sim}}{\to}
\Pi_1(\Mod_\kappa(T))\]
The first functor is induced by $\phi$ as in Recollection \ref{rec:gpd}. The second functor is the canonical inclusion, which is an equivalence as discussed in Recollection \ref{rec:conngpd} because $\Mod(T)$ is connected (Proposition \ref{prop:smallequiv}), and hence $\Pi_1(\Mod(T))$ is connected (Recollection \ref{rec:catfund}). The third functor is the weak inverse just constructed. Using the description of $G$ from Recollection \ref{rec:eqcats}, we see that $\tilde \phi$ is defined by the formula $\tilde \phi([\alpha]) = \llbracket i_{M_\ast}\rrbracket\inv \llbracket \alpha \rrbracket \llbracket i_{M_\ast} \rrbracket$.

The statement of the theorem (that $\phi$ is an isomorphism) equivalent to the statement that $\bbB \phi$ is an equivalence (Recollection \ref{rec:gpd}). Because $\tilde \phi$ is the composite of $\bbB \phi$ with an equivalence, this in turn is equivalent to the statement that $\tilde \phi$ is an equivalence, which we now show by constructing an explicit weak inverse.

\paragraph{Construction of inverse.}
We construct a weak inverse $\psi: \Pi_1(\Mod_\kappa(T)) \allowbreak \to \allowbreak \bbB \Gal_L(T, \bbU)$ to $\tilde \phi$. By the universal property of $\Pi_1$, we take $\psi$ to be the unique functor extending a functor $\Psi: \Mod_\kappa(T)\allowbreak \to \allowbreak \bbB \Gal_L(T,\bbU)$ defined as follows. Use the $\kappa$-universality of $\bbU$ to fix embeddings $i_M: M \to \bbU$ for each $\kappa$-small model $M$, noting that the morphism $i_{M_\ast}$ was fixed earlier in the proof. For convenience we take $i_M$ to be a sub-model inclusion $M \subseteq \bbU$ whenever $M$ is a sub-model of $\bbU$, but the embeddings $i_M$ are otherwise arbitrary. For $f: M \to N \in \Mod_\kappa(T)$ and $\alpha \in \Aut(\bbU)$, we say that $\alpha$ is a \defterm{compatible lift} of $f$ if $\alpha i_M = i_N f$. By strong $\kappa$-homogeneity of $\bbU$, for every morphism of $f: M \to N \in \Mod_\kappa(T)$ we may choose a compatible lift $\alpha_f$. We define $\Psi(f) := [\alpha_f]$. Note that if $\alpha,\beta$ are two compatible lifts of $f: M \to N$, then $\beta\inv \alpha$ fixes the model $i_M(M)$, and therefore $[\beta] = [\alpha]$ by definition of $\Gal_L(T)$. Thus we have $\Psi(f) = [\alpha]$ for \emph{any} compatible lift of $f$, and when computing $\Psi(f)$ we have the freedom to use whichever compatible lift is most convenient. For example, we show that $\Psi$ is a functor as follows. If $f = \id_M$ is an identity, then $\id_{\bbU}$ is a compatible lift, so $\Psi(\id_M) = [\id_\bbU]$ is the identity. And if $M \overset{f}{\to} N \overset{g}{\to} P$ is a composable pair and $\alpha,\beta$ are compatible lifts of $f,g$ respectively, then $\beta \alpha$ is a compatible lift of $gf$, so that $\Psi(gf) = [\beta\alpha] = [\beta][\alpha] = \Psi(g)\Psi(f)$.

\paragraph{Verification of inverse.} It remains to check that $\psi$ is a weak inverse to $\tilde \phi$.

First we check that $\tilde \phi \psi$ is naturally isomorphic to the identity on $\Pi_1(\Mod_\kappa(T))$. As noted in Recollection \ref{rec:catfund}, it suffices to show that $\tilde \phi \Psi \cong \llbracket - \rrbracket$, where $\llbracket - \rrbracket: \Mod_\kappa(T) \to \Pi_1(\Mod_\kappa(T))$ is the canonical functor $f \mapsto \llbracket f \rrbracket$. For this, define $\iota: \llbracket - \rrbracket \Rightarrow \tilde \phi \Psi$ by the formula $\iota_M = G(\llbracket i_{M_\ast} \rrbracket \inv \llbracket i_M \rrbracket)$. Note here that $\llbracket i_{M_\ast} \rrbracket \inv$ is well-defined because $\Pi_1(\Mod(T))$ is a groupoid, and for the same reason $\iota_M$ is an isomorphism. It is easy to check that $\iota$ is a natural isomorphism.

Now we check that $\psi \tilde \phi$ is equal to the identity on $\bbB \Gal_L(T)$, i.e. that for each $\alpha \in \Aut(\bbU)$ we have $\psi(\tilde \phi([\alpha])) = [\alpha]$. We have $\tilde \phi([\alpha]) = \llbracket i_{M_\ast} \rrbracket\inv \llbracket \alpha i_{M_\ast} \rrbracket$, where the composition takes place in $\Pi_1(\Mod(T))$. By Downward L\"owenheim-Skolem, there is a $\kappa$-small sub-model $N \overset{i_N}{\to} \bbU$ containing both $M_*$ and $\alpha M_*$. Let $j: M_\ast \to N$ denote the inclusion and $k = \alpha j: M_\ast \to N$. Then we have
    \begin{align*}
        \tilde \phi([\alpha])
        &= \llbracket i_{M_\ast} \rrbracket\inv \llbracket \alpha i_{M_\ast} \rrbracket \\
        &= \llbracket i_N j \rrbracket\inv \llbracket i_N k \rrbracket \\
        &= \llbracket j \rrbracket\inv \llbracket i_N \rrbracket\inv \llbracket i_N \rrbracket \llbracket k \rrbracket \\
        &= \llbracket j \rrbracket \inv \llbracket k \rrbracket
    \end{align*}
    Now the composition takes place in $\Pi_1(\Mod_\kappa(T))$, so we have 
    \begin{align*}
    \psi(\tilde \phi([\alpha])) = \psi(\llbracket j \rrbracket)\inv \psi(\llbracket k \rrbracket)
    \end{align*}
    by functoriality of $\psi$, which is equivalently $\Psi(j)\inv \Psi(k)$. To compute this, we may choose $\id_\bbU$ as a compatible lift for $j$ and $\alpha$ as compatible lift for $k$ to see that $\psi(\tilde \phi([\alpha])) = [\id_\bbU]\inv [\alpha] = [\alpha]$.
\end{proof}

From Theorem \ref{thm:mainthm}, we recover some known facts about the Lascar group:

\begin{cor}[{\cite[Theorem 4.3]{resplendent}}]\label{cor:lascar-bounded}

$\Gal_L(T,\bbU)$ is independent, up to isomorphism, of the choice of a $|T|^+$-universal and strongly $|T|^+$-homogeneous model $\bbU$. 
\end{cor}
\begin{proof}
By Theorem \ref{thm:mainthm}, $\Gal_L(T,\bbU) \cong \pi_1(\Mod(T),\bbU)$ by taking $\kappa = |T|^+$. Moreover, $T$ is complete, so $\Mod(T)$ is a connected category (Proposition \ref{prop:smallequiv}(\ref{prop:smallequiv.item:two})), hence $\Pi_1(\Mod(T))$ is a connected groupoid (as noted in Recollection \ref{rec:catfund}). As remarked in Recollection \ref{rec:conngpd}, this implies that the groups $\pi_1(\Mod(T),\bbU)$ are all isomorphic for different $\bbU$.
\end{proof}
\begin{rmk}
Note that in our theorem, the homogeneity condition on $\bbU$ is that it needs to be $|T|^+$-homogeneous with respect to sub-models, but as pointed out in Remark \ref{rmk:term-homog} by the referee, the two notions coincide when $\bbU$ is $|T|^+$-universal. Thus our hypotheses are equivalent to the hypotheses found in \cite[Theorem 4.3]{resplendent}.
\end{rmk}
\begin{rmk}
Here is a slight variation of the proof of Corollary \ref{cor:lascar-bounded}. Again by Theorem \ref{thm:mainthm} we have $\Gal_L(T,\bbU) \cong \pi_1(\Mod(T),\bbU)$. In turn we have $\pi_1(\Mod(T),\bbU) \cong \pi_1(|\Mod(T)|,\bbU)$. Because $\Mod(T)$ is connected, $|\Mod(T)|$ is path-connected. Therefore $\pi_1(|\Mod(T)|,\bbU)$ is independent of $\bbU$ up to isomorphism, and the result follows.

Thus we deduce the independence of $\Gal_L(T,\bbU)$ from $\bbU$ directly from the fact that for a path-connected space $X$, the fundamental group $\pi_1(X,x)$ is independent of the base-point $x$ up to isomorphism. Of course, this standard topological fact is essentially equivalent to the fact from Recollection \ref{rec:conngpd} used above, namely that if $\Gamma$ is a connected groupoid, then $\pi_1(\Gamma, x)$ is independent of $x$ up to isomorphism.
\end{rmk}

\begin{cor}[{\cite[Theorem 43]{lascar}}]\label{cor:lascar-size}
$\Gal_L(T)$ has cardinality less than or equal to $2^{|T|}$. 
\end{cor}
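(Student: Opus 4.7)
The plan is to apply Theorem \ref{thm:mainthm} to translate the bound on $|\Gal_L(T)|$ into a bound on $\pi_1$ of the classifying space of a small category, and then carry out a straightforward cardinality count on a skeleton.

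I will take $\mu = |T|^+$, so that $\mu$-small models of $T$ are precisely those of cardinality at most $|T|$. Applying Theorem \ref{thm:mainthm} for some regular $\kappa \geq \mu$ for which a $\kappa$-universal, strongly $\kappa$-homogeneous $\bbU$ is known to exist (such a $\kappa$ can be produced by standard iterated union constructions), followed by Proposition \ref{prop:smallequiv}(\ref{prop:smallequiv.item:i^*}) applied at the smaller cardinal $\mu$, yields $\Gal_L(T) \cong \pi_1(\Mod_\mu(T), M)$ for any $M \in \Mod_\mu(T)$. Passing to a skeleton $\calS$ of $\Mod_\mu(T)$ gives an equivalence of categories, hence by Proposition \ref{prop:catequiv}(\ref{propitem:catequiv}) a homotopy equivalence of classifying spaces and thus an isomorphism $\pi_1(|\calS|, M) \cong \pi_1(|\Mod_\mu(T)|, M)$. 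So it suffices to bound $|\pi_1(|\calS|, M)|$ by $2^{|T|}$.

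The main count is a bound on $|\Mor(\calS)|$. An $L$-structure of cardinality $\leq |T|$ on a fixed underlying set $X$ of cardinality $|T|$ is determined by the interpretations of the $\leq |T|$ symbols of $L$; each symbol admits at most $|T|^{|T|} = 2^{|T|}$ interpretations, so there are at most $(2^{|T|})^{|T|} = 2^{|T|}$ structures on $X$, and hence at most $2^{|T|}$ isomorphism classes of $\mu$-small models overall. The elementary embeddings between any two such models form a subset of the set of functions between the underlying sets, so there are at most $|T|^{|T|} = 2^{|T|}$ of them. Combining, $|\Mor(\calS)| \leq 2^{|T|}$.

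Finally, I bound the fundamental group. By Definition \ref{def:classifying-space}, the CW complex $|\calS|$ has one $1$-cell per non-identity morphism and one $2$-cell per composition triangle, so $\pi_1(|\calS|, M)$ is presented (via van Kampen, or equivalently via the free-groupoid description of $\Pi_1(\calS)$ modulo composition relations) as a quotient of the free group on the non-identity morphisms of $\calS$. Hence $|\pi_1(|\calS|, M)|$ is at most the cardinality of the free group on $\leq 2^{|T|}$ generators, which is $(2^{|T|})^{\aleph_0} = 2^{|T|}$. The only potential obstacle is purely bureaucratic: cleanly chaining together the two applications of Theorem \ref{thm:mainthm} and Proposition \ref{prop:smallequiv} at different cardinals, and phrasing the final generator-and-relations bound rigorously. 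No genuinely new ingredient beyond the results already in hand is required.
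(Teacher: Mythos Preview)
Your proposal is correct and follows essentially the same approach as the paper: reduce to $\Mod_{|T|^+}(T)$ via Theorem \ref{thm:mainthm} and Proposition \ref{prop:smallequiv}, pass to a skeleton, and bound its size by $2^{|T|}$. The paper's proof is terser, leaving implicit both the morphism count and the final free-group cardinality step that you spell out.
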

\begin{proof}
By Theorem \ref{thm:mainthm} and Proposition \ref{prop:smallequiv}, we have $\bbB\Gal_L(T) \simeq \Pi_1(\Mod_\kappa(T))\simeq \sk\Pi_1(\Mod_\kappa(T)) \simeq\Pi_1(\sk \Mod_\kappa(T)$. Therefore it suffices to bound the size of a skeleton of the category $\Mod_\kappa(T)$. Taking $\kappa$ to be $|T|^+$, it is clear that a skeleton of $\Mod_\kappa(T)$ is has at most $2^{|T|}$ many objects, namely the non-isomorphic models of cardinality at most $|T|$. And the Hom sets are bounded by $2^{|T|}$ as well. Thus we have $\#(\sk\Pi_1(\Mod_\kappa(T))) \leq \max(\aleph_0, \#(\Mod_\kappa(T)))$. Hence the corollary follows.
\end{proof}

\begin{rmk}
Let $T$ be an incomplete first-order theory. The results of this section may be applied as follows. The category $\Mod(T)$ is the disjoint union of the categories $\Mod(T')$ for each completion $T'$ of $T$. Then likewise $\Pi_1(\Mod(T)) = \amalg_{T'} \Pi_1(\Mod(T'))$. We have that $\Pi_1(\Mod(T)) \simeq \amalg_{T'} \Gal_L(T')$.
\end{rmk}
 
\begin{rmk}
Theorem \ref{thm:mainthm} suggests that we may define the \defterm{Lascar groupoid} of an arbitrary category $\calC$ to simply be its fundamental groupoid $\Pi_1(\calC)$. Note that by Theorem \ref{thm:mainthm}, the Lascar group agrees with $\pi_1(\calC,M)$ for any $M\in \calC$ when $\calC = \Mod(T)$. The same proof also yields a description of $\Pi_1(\calC)$ similar to the usual description of the Lascar group as in Definition \ref{def:lascar-start} when $\calC$ is an Abstract Elementary Class with joint embedding, amalgamation, and no maximal models \cite{shelah2009classification}. 
\end{rmk}

%%%%%%%%%%%%%%%%%%%%%%%%%%%%%%%%%%%%%%%%%%%%%%%%%%%%%%%%
%%% SECTION 5: EXAMPLES
%%%%%%%%%%%%%%%%%%%%%%%%%%%%%%%%%%%%%%%%%%%%%%%%%%%%%%%%

 \section{Examples}\label{sec:examples}
 In this section, we illustrate the shift in perspective afforded by Theorem \ref{thm:mainthm} by computing $\Gal_L(T)$ for some familiar theories $T$. In fact, we do a bit more: we show that in all of these examples, $|\Mod(T)|$ is aspherical and thus, as discussed in Remark \ref{rmk:kgn2}, the entire homotopy type of $|\Mod(T)|$ is characterized by its fundamental group $\pi_1(|\Mod(T)|) \cong \Gal_L(T)$.
 
 \begin{eg}[Sets]\label{eg:sets}
 Let $T$ be a complete theory in the empty language. Then $T$ is the theory of a set with $n$-many elements, for some $n \in \nats \cup \{\infty\}$. In the finite case, $\Mod (T)$ is a groupoid equivalent to $\bbB \Sigma_n$ where $\Sigma_n$ is the the $n$th symmetric group; see Example \ref{eg:kgn}, its classifying space is $B \Sigma_n$ and in particular $\Gal_L(T) \cong \Sigma_n$. In the infinite case, $\Mod(T)$ has functorial joint embedding given by disjoint union (see Definition \ref{def:fun-jn}), so $\Mod(T)$ is contractible by Proposition \ref{prop:contractible}(\ref{prop:contractible.item:almostcoprod}). Thus, in general $\Mod(T) \simeq \bbB \Sigma_n$ and $\Gal_L(T) \cong \Sigma_n$, where we adopt the convention that $\Sigma_n$ is trivial when $n = \infty$.
 
 More precisely, there is a functor $F: \Mod(T) \to \bbB \Sigma_n$  such that $|F|$ is a homotopy equivalence, defined as follows. If $n = \infty$ there is a unique such functor as we have the convention that $\Sigma_\infty$ is trivial. If $n<\infty$ we arbitarily label each model of $T$ with the elements $\{1,\dots,n\}$. Each elementary embedding $\phi$ permutes the labels, thus yielding a well-defined element $F(f) \in \Sigma_n$, and this assignment is functorial. 
 \end{eg}
 
 \begin{eg}[$\kappa$-sorted Sets]\label{eg:finsortsets}
 Let $T$ be a complete theory in the empty language over $\kappa$-many sorts. Then for each sort $\alpha$, there is some $n_\alpha \in \nats \cup \{\infty\}$ such that $T$ says there are $n_\alpha$-many elements of sort $\alpha$. The category $\Mod (T)$ is just a product of the categories of models of each sort individually.
 Taking classifying spaces preserves finite products up to weak homotopy equivalence (Fact \ref{fact:finprod}), but not infinite products in general. But we have $\Mod(T) \simeq (\prod_{n_\alpha < \infty} \bbB \Sigma_{n_\alpha}) \times (\prod_{n_\alpha = \infty} \Mod(T'))$ where $T'$ is the theory of an infinite set, and so $|\Mod(T)| \simeq |\prod_{n_\alpha < \infty} \bbB \Sigma_{n_\alpha}| \times |\prod_{n_\alpha = \infty} \Mod(T')|$. On the one hand, taking classifying spaces does preserve infinite products of groupoids up to weak homotopy equivalence (Fact \ref{fact:prodgpd}), so the first factor is weak homotopy equivalent to $\prod_{n_\alpha < \infty} B \Sigma_{n_\alpha}$. On the other hand, a product of categories with functorial joint embedding (cf. Example \ref{eg:sets}) again has functorial joint embedding, so the second factor is contractible. Thus $|\Mod(T)| \simeq \prod_{\alpha} B\Sigma_{n_\alpha}$; in particular $\Gal_L(T) \cong \prod_{\alpha} \Sigma_{n_\alpha}$ (recall our convention from Example \ref{eg:sets} that $\Sigma_k$ is trivial when $k= \infty$).
 
More precisely, there is a functor $F: \Mod(T) \to \prod_\alpha \bbB \Sigma_{n_\alpha}$ such that $|F|$ is a homotopy equivalence, defined by arbitrarily labeling each finite sort $\alpha$ with the set $\{1,\dots n_\alpha\}$, for each $M \in \Mod(T)$, and taking the induced maps on labelings.
 \end{eg}

For the next example, we first recall the classification of complete theories $T$ of an equivalence relation. For each $n \in \nats$, there is $m(n) \in \nats \cup \{\infty\}$ such that $T$ says there are $m(n)$ many equivalence classes of size $n$. There are two cases: in case $(a)$, there are $m(n) \neq 0$ for arbitrarily large $n$, while in case $(b)$, we have $m(n) = 0$ for $n$ sufficiently large. Either way, $T$ admits quantifier elimination once we add predicates $p_n(x)$ saying that the equivalence class of $x$ is of size $\geq n$. In case $(a)$ the number of infinite equivalence classes is arbitrary, while in case $(b)$, $T$ says that there are $m(\infty)$ many infinite equivalence classes for some $m(\infty) \in \nats \cup \{\infty\}$. An elementary embedding between models of $T$ is simply an injection preserving the equivalence relation and the size of each finite equivalence class.

\begin{eg}[An equivalence relation]
Let $T$ be the complete theory of an equivalence relation, and define $m: \nats \cup \{\infty\} \to \nats \cup \{\infty\}$ as above. In case $(a)$ as above, the inclusion $\calC \to \Mod(T)$, of the full subcategory $\calC$ of models with no infinite equivalence classes, has a right adjoint given by deleting all the infinite equivalence classes. Thus $|\Mod(T)| \simeq |\Mod(\calC)|$ by Fact \ref{fact:catequiv}(\ref{fact:catequiv.item:adj}). Therefore in case $(a)$ we will simply replace $\Mod(T)$ with $\calC$ and set $m(\infty) = 0$. We will abusively refer to $\calC$ as $\Mod(T)$, so from now on there is no need to distinguish between case $(a)$ and case $(b)$.

For each $M \in \Mod(T)$ and each $n \in \nats \cup \{\infty\}$ with $m(n) < \infty$, arbitrarily label the equivalence classes of size $n$ with the numbers $\{1,\dots, m(n)\}$, and if $m(n)< \infty$ and $n < \infty$ both hold, label also the elements of each equivalence class of size $n$ with the numbers $\{1,\dots,n\}$.  Any elementary embedding induces a bijection of labels, so we obtain a functor $F: \Mod(T) \to \prod_{m(n) < \infty} \bbB (\Sigma_{n} \wr \Sigma_{m(n)})$ (recall our convention from Example \ref{eg:sets} that when $n = \infty$, $\Sigma_n$ is trivial). Here $\Sigma_{n} \wr \Sigma_{m(n)}$ is the wreath product $\Sigma_{n}^{\times m(n)} \rtimes \Sigma_{m(n)}$, and the product is over all $n \in \nats \cup \{\infty\}$ such that $m(n) < \infty$. We claim that this functor induces an equivalence of classifying spaces. To see this, consider the following diagram:

\begin{tikzcd}
\Mod(T') \times \Mod(T'') \ar[r] \ar[d,"\sim"] & \Mod(T) \ar[r,"\pi F"] \ar[d,"F"] & \prod_{m(n) < \infty} \bbB \Sigma_{m(n)} \ar[d, equal] \\
\prod_{m(n) < \infty} \bbB \Sigma_{n}^{\times m(n)} \ar[r] & \prod_{m(n) <\infty} \bbB (\Sigma_{n} \wr \Sigma_{m(n)}) \ar[r,"\pi"] & \prod_{m(n) < \infty} \bbB \Sigma_{m(n)}
\end{tikzcd}

The bottom row arises from applying the functor $\bbB$ to a short exact sequence of groups, so it gives rise to a homotopy fiber sequence of classifying spaces by Fact \ref{fact:ses}. The functor $F$ is the one described above, and so the functor $\pi F$ is the functor which forgets all but the labelings of the equivalence classes. Its fiber is canonically isomorphic to $\Mod(T') \times \Mod(T'')$. Here $T'$ is the theory with a sort $S_{n,i}$ for each $n \in \nats \cup \{\infty\}$ such that $m(n) < \infty$, and each $1 \leq i \leq m(n)$, which says that $S_{n,i}$ has $n$-many elements. $T''$ is the theory with a sort $S_n$ for each $n \in \nats \cup \{\infty\}$ such that $m(n) = \infty$ and an equivalence relation on $S_n$ with infinitely many classes, all with $n$ elements. The leftmost downward functor is given by first projecting onto $\Mod(T')$ and then applying the homotopy-equivalence-inducing functor of Example \ref{eg:finsortsets}. The category $\Mod(T'')$ has functorial joint embedding given by disjoint union (cf. Definition \ref{def:fun-jn}), and so its classifying space is contractible by Proposition \ref{prop:contractible}(\ref{prop:contractible.item:almostcoprod}). Thus the leftmost downward arrow is the composite of two homotopy-equivalence-inducing functors and so also induces a homotopy equivalence. Since the leftmost and rightmost downward arrows induce homotopy equivalences of classifying spaces, by Fact \ref{fact:five-fiber} in order to show that $F$ also induces a homotopy equivalence it will suffice to show that the the rows induce homotopy fiber sequences of classifying spaces. We have already noted this for the bottom row, so it remains to show that the top row induces a homotopy fiber sequence of classifying spaces.

To this end, we invoke Quillen's Theorem B (Theorem~\ref{thm:quillen-b}) in the guise of Fact \ref{fact:qtb-cor}. The functor $\pi F$ is a Grothendieck opfibration. For if $\sigma \in \prod_{m(n) < \infty} \Sigma_{m(n)}$ and $M \in \Mod(T)$, then there is an automorphism $\phi$ of $M$ which permutes the labels in the manner described by $\sigma$ (recall from Recollection \ref{rec:fibration} that a lift of an isomorphism is (co)cartesian iff it is an isomorphism). By Fact \ref{fact:qtb-cor}, $\Mod(T') \times \Mod(T'') \to \Mod(T) \to \prod_{m(n) < \infty} \bbB \Sigma_{m(n)}$ induces a homotopy fiber sequence of classifying spaces. Thus $F$ induces a homotopy equivalence of classifying spaces.

Finally, the classifying space functor commutes with arbitrary products of groupoids up to weak homotopy equivalence (Fact \ref{fact:prodgpd}). Thus we have $|\Mod(T)| \simeq \prod_{m(n) < \infty} B (\Sigma_{n} \wr \Sigma_{m(n)})$. In particular, $\Gal_L(T) \cong \prod_{m(n) < \infty } \Sigma_{n} \wr \Sigma_{m(n)}$ (recall our convention from Example \ref{eg:sets} that $\Sigma_k$ is trivial when $k = \infty$).
\end{eg}
 
 \begin{eg}[Dense Linear Orders]
 Let $T$ be the theory of infinite dense linear orders without endpoints. Then $\Mod(T)$ has functorial joint embedding (see Definition \ref{def:fun-jn}). For, given models $M$ and $N$ we can form a new model placing $M$ above $N$. This construction is functorial, and admits natural embeddings of $M$ and $N$. Hence by Proposition \ref{prop:contractible}(\ref{prop:contractible.item:almostcoprod}) $|\Mod(T)|$ is contractible and in particular $\Gal_L(T)$ is trivial.
 \end{eg}
 
 \begin{eg}[Torsion-free Divisible Abelian Groups]
 Let $T$ be the theory of torsion-free divisible abelian groups. Then $\Mod(T)$ has functorial joint embedding (see Definition \ref{def:fun-jn}) using the direct sum functor. Hence by Proposition \ref{prop:contractible}(\ref{prop:contractible.item:almostcoprod}), $|\Mod(T)|$ is contractible and in particular $\Gal_L(T)$ is trivial.
 \end{eg}
 
 \begin{eg}[Divisible Ordered Abelian Groups]\label{eg:doag}
 Let $T$ be the theory of divisible ordered abelian groups. Then $\Mod(T)$ has functorial joint embedding given by taking the direct sum under the lexicographic ordering. Hence by Proposition \ref{prop:contractible}(\ref{prop:contractible.item:almostcoprod}), $|\Mod(T)|$ is contractible and in particular $\Gal_L(T)$ is trivial.
 \end{eg}
 
 \begin{eg}[o-minimal expansions of real closed fields]\label{eg:rcf}
 Let $T$ be the theory of some o-minimal expansion of real-closed fields. Recall that $T$ has $\emptyset$-definable Skolem functions \cite{Lou}. So the definable closure of the empty set is a model, which is there fore an initial object in $\Mod(T)$ (cf. \ref{rmk:skolem}). Hence by Proposition \ref{prop:contractible}(\ref{prop:contractible.item:initial}), $|\Mod(T)|$ is contractible and $\Gal_L(T)$ is trivial.
 \end{eg}
 
 \begin{eg}[Algebraically Closed Fields]\label{eg:acf}
 Let $T$ be the theory of algebraically-closed fields of characteristic $p$ (where $p$ is prime or 0). Let $k$ be the prime field, $\bar k$ its algebraic closure, and $G = \Gal(\bar k / k)$. Arbitrarily choose an embedding $\bar k \to M$ for each $M \in \Mod(T)$. Each field embedding induces an isomorphism of copies of $\bar k$, and this induces a functor $F: \Mod(T) \to \bbB G$, which is a Grothendieck opfibration (recall from Recollection \ref{rec:fibration} that a lift of an isomorphism is (co)cartesian iff it is an isomorphism). The fiber is equivalent to the the category of algebraically-closed $\bar k$-algebras, which has an initial object given by the identity $\bar k \to \bar k$, and so has contractible classifying space by Proposition \ref{prop:contractible}(\ref{prop:contractible.item:initial}). By Quillen's Theorem A (Theorem~\ref{thm:quillen-a}), $F$ is a homotopy equivalence. Hence $|\Mod(T)| \simeq |\bbB G| \cong BG^\delta$ (where $G^\delta = G$, considered as a discrete group). In particular, $\Gal_L(T) \cong  G$.
 \end{eg}
 
 \begin{eg}[Random Graphs]\label{eg:ran}
 Let $T$ be the theory of a random graph. Recall that $T$ may be axiomatized in the graph language by saying that for every two finite sets $S_1,S_2$ of vertices, there is a point $v$ which is connected by an edge to all the points of $S_1$ and none of the points of $S_2$. In this language, $T$ admits quantifier elimination and so is model-complete. We will show that $|\Mod(T)|$ is contractible; in particular, $\Gal_L(T)$ is trivial.
 
 First note that the category $\calG$ of (loop-free, undirected, simple) graphs and embeddings between them has a contractible classifying space; for instance the empty graph is an initial object. (In general, for any signature $\Sigma$, the category of $\Sigma$-structures and embeddings splits as the disjoint union of several components, each of which has an initial object, so the classifying space is discrete up to homotopy.) We will now construct functors and natural transformations between $\calG$ and $\Mod(T)$ satisfying the hypotheses of Fact \ref{fact:catequiv}(\ref{fact:catequiv.item:minus}), showing that $|\Mod(T)| \simeq |\calG|$ is contractible.
 
 There is a functor $F: \calG \to \Mod(T)$ which sends a graph $\Gamma$ to the graph $F\Gamma = \cup_{n = 0}^\infty \Gamma_n$ built up as follows. We take $\Gamma_0 = \Gamma$. Given $\Gamma_n$, $\Gamma_{n+1}$ consists of $\Gamma_n$ along with, for each finite set $S$ of vertices in $\Gamma_n$, a specified vertex $v(S)$, which has edges connecting it to each element of $S$, but no other edges in $\Gamma_{n+1}$. It is clear that $F \Gamma$ is a model of $T$, i.e. a random graph. A graph embedding $f: \Gamma \to \Gamma'$ is extended to an embedding $Ff: F\Gamma \to F \Gamma'$ by inductively defining $Ff(v(S)) = v(f(S))$. By model completeness, $Ff$ is elementary, and $F$ is clearly functorial. There is also a forgetful functor $U: \Mod(T) \to \calG$. Now, $F$ and $U$ are not adjoint, but there are natural transformations $\id_\calG \Rightarrow UF$ , given by embedding $\Gamma$ into $F\Gamma$ in the natural way, and $\id_{\Mod(T)} \Rightarrow FU$ ,also given by embedding $\Gamma$ into $F\Gamma$ in the natural way; this embedding is elementary by model completeness.
 \end{eg}
 
 \begin{rmk}
 The method of proof in Example \ref{eg:ran} is very suggestive. As the referee observed, it seems likely that something similar can be done for more general Fra\"i ss\'e classes. We do not pursue this possibility in this paper, although we re-use the same method in the next example.
 \end{rmk}
 
\begin{eg}[Triangle-free Random Graphs]\label{eg:tri-free-ran}
Let $T$ be the theory of triangle-free random graph as in \cite{Henson}. Recall that $T$ may be axiomatized by saying that the graph is triangle free and moreover for every finite sets $S_1,S_2$ of vertices where the induced subgraph on $S_1$ is discrete, there is a vertex connected by an edge to each point of $S_1$ which is not connected to any point of $S_2$. As in the example of random graphs, $T$ admits quantifier elimination and so is model complete.

Let $\calG$ be the category of triangle-free graphs and strong embeddings between them as morphisms. This category $\calG$ has an initial object hence the classifying space is contractible. We proceed as the previous example, \ref{eg:ran}, to construct a functor $F:\calG \to \Mod(T)$. Let $\Gamma$ be an arbitrary triangle-free graph. Take $\Gamma_0=\Gamma$. Assume $\Gamma_n$ has been constructed, $\Gamma_{n+1}$ consists of $\Gamma_n$ along with, for each finite set $S\subseteq \Gamma_n$ such that the induced subgraph on $S$ is discrete, a specified vertex $v(S)$ with the property that for any vertex $w$, there is an edge between $w$ and $v(S)$ iff $w\in S$. It is routine to check that the final construction yields a triangle-free random graph. One can similarly define extensions of embeddings as in \ref{eg:ran}. Let $U$ be the forgetful functor from $\Mod(T)$ to $\calG$. The same argument as in \ref{eg:ran} shows that $U,F$ induce homotopy equivalences, hence $|\Mod(T)|$ is contractible.
\end{eg}

\begin{rmk}
In Example \ref{eg:tri-free-ran}, $\Mod(T)$ fails to have 3-amalgamation. As mentioned in the Introduction, this illustrates that the failure of higher amalgamation does not imply that $|\Mod(T)|$ has interesting higher homotopy groups.
\end{rmk}
 
  \begin{eg}[Algebraically Closed Valued Fields]
 Let $T$ be $\mathrm{ACVF}_{q,p}$, where $q,p$ stands for the characteristic of the valued field and residue field respectively. There is a forgetful functor $U: \Mod(T) \to \Mod(T_1) \times \Mod(T_2)$ where $T_1$ is the theory of algebraically closed fields of characteristic $p$ and $T_2$ is the theory of divisible ordered abelian groups, given by taking the residue field and the valuation group, respectively. Note that ACVF has quantifier elimination, hence there is a functor in the other direction $F: \Mod(T_1) \times \Mod(T_2) \to \Mod(T)$ which takes Hahn series, when $T$ is the equicharacteristic case and in the mixed characteristic case, take the $p$-adic Mal'cev series as in \cite{Poonen}. Again, these functors are not adjoint, but $UF$ is the identity, and by \cite[Theorems 5 and 6]{kap1942} and
 \cite[Theorem 2]{Poonen}, there is a embedding $M \to FU(M)$ for $M \in \Mod(T)$, which corresponds to taking the maximal immediate completion. Moreover, the embedding is natural as the algebraically closed valued fields satisfies Kaplansky's ``hypothesis A" \cite{kap1942}, hence the maximal immediate completion is natural. So by Fact \ref{fact:catequiv}(\ref{fact:catequiv.item:minus}), $|\Mod(T)| \simeq |\Mod(T_1)| \times |\Mod(T_2)|$, so that $\Gal_L(T) \cong \Gal_L(T_1) \times \Gal_L(T_2)$. We have seen in Example \ref{eg:doag} that $|\Mod(T_2)|$ is trivial and in Example \ref{eg:acf} that $|\Mod(T_1)| \simeq BG^\delta$, where $G$ is the Galois group of the prime model. So $|\Mod(T)| \simeq BG^\delta$ and $\Gal_L(T) \cong G$.
 \end{eg}
 
 \begin{eg}[$G$-torsors]\label{eg:torsor}

 Let $G$ be a compact Lie group and let $T_0$ be an expansion of the theory of real-closed fields by finitely many real-analytic functions on bounded rectangles such that $G$ can be defined over $T_0$, note that $T_0$ is o-minimal and has definable Skolem functions, hence admits an initial model. Let $T$ denote the theory of a structure $(R,X)$ where $R$ is a model of $T_0$ and $X$ is a set equipped with a $G(R)$-action such that $X$ is a $G(R)$-torsor. Ziegler \cite{zie} showed that $\Gal_L(T) = G(\reals)$. 
 We will recover this result (at the level of discrete groups, of course). However, in this case, we do not identify the higher homotopy groups of $\Mod(T)$.
 
 For $(R,X) \in \Mod(T)$. We define and equivalence relation $\sim$ on $G(R)$ by $x\sim y$ if $x,y$ differs by an element in the infinitesimal subgroup of $G$. Then by the fact that $G$ is definably compact, $(G(R)/\sim)^\wedge \cong G(\reals)$, where $(-)^\wedge$ denotes the completion. Similarly, let $X(\reals)  = (X/ \sim)^\wedge$ where $x \sim y$ if there is an infinitesimal $g \in G(R)$ such that $g x = y$, and $(-)^\wedge$ denotes taking the completion. This construction yields a functor $F: \Mod(T) \to G(\reals)\Tor$, where $G(\reals)\Tor$ is the groupoid of torsors over $G(\reals)$; note that there is an equivalence of groupoids $G(\reals)\Tor \simeq \bbB G(\reals)^\delta$. The functor $F$ has a section sending $X \mapsto (\reals,X)$. Thus the induced map $F_\ast: \Gal_L(T) = \pi_1(|\Mod(T)|) \to \pi_1(G(\reals)\Tor) = G(\reals)$ is surjective.
 
 Now we show that $F_\ast : \pi_1(|\Mod(T)|, (R,X(R))) \to G(\reals)$ is injective, where we have chosen the base-point $(R,X(R))$ such that $R$ is the initial model of $T_0$. Consider an arbitrary group element $\gamma \in \pi_1(|\Mod(T)|,(R,X(R))$. Then by Theorem \ref{thm:mainthm}, we have $\Gal_L(T,\bbU)\cong \pi_1(\Mod(T),\bbU)\cong \pi_1(\Mod(T),(R,X))$, we may represent $\gamma = [j]\inv [i]$ where $i: (R,X(R)) \to \bbU$ is a standard embedding into a monster model $\bbU$, and $j: (R,X(R)) \to \bbU$ differs by an automorphism. Let $x,y$ be the images of a named base-point of the torsor parts of $i(R,X(R))$, $j(R,X(R))$ respectively. Assume that $\gamma\in \ker F_\ast$. Then $x$ and $y$ differ by an infinitesimal element of $G(\bbU)$. It is easy to see that for two elements in the torsor sort, their type is determined by the type of the group element they differ by. Thus we may find $z$ in the torsor part of $\bbU$ such that $\tp(x,z) = \tp(y,z)$, and so there is an automorphism $\alpha$ of $\bbU$ taking $y$ to $x$ and fixing $z$. To find such $z$, it suffices to show the following: given $\epsilon\in G$ an infinitesimal, we can find $g\in G$ such that $\tp(g\epsilon)=\tp(g)$. Take $g$ to be a realization of a $f$-generic type on $G$, note that $\epsilon \in G^{00}$, hence by \cite[Lemma 8.18]{pierre-nip-book}, we have $\tp(g\epsilon)=\tp(g)$ and we can simply take $\epsilon y=x$ and $gx=z$. Let $k: (R,X(R)) \to \bbU$ be an embedding with $z$ in its image. Then $\gamma = [j]\inv [i] = [\alpha i]\inv [i] = [i]\inv [\alpha]\inv [k] [k] \inv [i] = [i]\inv [\alpha \inv k] [k]\inv [i] =  [i]\inv [k] [k]\inv [i] = 1$. So $F_\ast$ is injective as well as surjective, and so an isomorphism, and $\Gal_L(T) \cong G(\reals)$. 
 \end{eg}
 
%%%%%%%%%%%%%%%%%%%%%%%%%%%%%%%%%%%%%%%%%%%%%%%%%%%%%%%%
%%% SECTION 6: HIGHER HOMOTOPY
%%%%%%%%%%%%%%%%%%%%%%%%%%%%%%%%%%%%%%%%%%%%%%%%%%%%%%%%
 
\section{Higher Homotopy groups}\label{sec:higher-hty}

We do not know in general the answer to the following

\begin{question}\label{quest:hom-type}
Which homotopy types can be realized as $|\Mod(T)|$ for some complete theory $T$?
\end{question}

If the class of such homotopy types is relatively ``diverse" (in particular, if it includes some non-aspherical spaces), it would introduce new invariants to model theory, such as the higher homotopy groups, homology and cohomology groups of $|\Mod(T)|$, and it would be worth investigating their model theoretic meaning.

We temper our enthusiasm for this possibility with the reality that we don't even know an example of a theory $T$ for which $|\Mod(T)|$ is not aspherical. In Subsection \ref{subsec:higher-hty-2} we consider more restrictive categorical properties that we obtain in $\Mod(T)$ for certain theories $T$ which imply that $|\Mod(T)|$ is aspherical. In Subsection \ref{subsec:higher-hty-1}, we formulate Question \ref{quest:aec-hty}, an easier, purely categorical analog of Question \ref{quest:hom-type}, and find some partial answers (Observation \ref{obs:nmm}, Observation \ref{obs:ap}, Theorem \ref{thm:aechty}). 

\subsection{Realizing arbitrary homotopy types}\label{subsec:higher-hty-1}

In our work thus far we have used the following properties of the category $\Mod(T)$:
\begin{rmk}\label{rmk:misc-props}
The following are several properties that a category $\calC$ may satisfy (and which are satisfied in the case $\calC = \Mod(T)$):
\begin{enumerate}
\item\label{item:jep} $\calC$ has the joint embedding property (JEP) (cf. Definition \ref{def:JEP-AP}).
\item\label{item:ap} $\calC$ has the amalgamation property (AP) (cf. Definition \ref{def:JEP-AP}).
% $\alpha:C\rightarrow A$ and $\beta:C\rightarrow B$, there is an object $D$ and two morphisms $\alpha':A\rightarrow D$, $\beta':B\rightarrow D$ such that the following commutes.
% \begin{tikzcd}
%                     &D& \\
% A\ar[ur,"\alpha'"]&  &B\ar[ul,"\beta'"]\\
% &C \ar[ul,"\alpha"]     \ar[ur,"\beta"]&
% \end{tikzcd}
\item\label{item:nmm} $\calC$ has no maximal models. That is, for every object $X \in \calC$, there is an object $Y$ in $\calC$ and a morphism $X \to Y$ which is not an isomorphism.
\item\label{item:mono} Every morphism is a monomorphism (cf. Recollection \ref{rec:mono}).
\item\label{item:acccat} $\calC$ is an accessible category (cf. Definition \ref{def:acc}).
\item\label{item:filco} $\calC$ has filtered colimits.
\item\label{item:aec} $\calC$ is equivalent to an Abstract Elementary Class (AEC) in the sense of \cite{shelah2009classification}.
\item\label{item:monster} $\calC$ has a $\kappa$-universal and $\kappa$-homogeneous object for some $\kappa$ (in the sense of Definition \ref{def:univ-homog-etc} and Remark~\ref{rmk:univ-homog} ).
\end{enumerate}
\end{rmk}

Note that JEP (\ref{item:jep}) implies that $\calC$ is connected. As a sort of converse, if AP (\ref{item:ap}) holds, then each connected component of $\calC$ satisfies JEP (\ref{item:jep}). It may seem that (\ref{item:monster}) is the most powerful condition, but any category satisfying (\ref{item:jep}),(\ref{item:ap}),(\ref{item:acccat}), and (\ref{item:filco}) actually satisfies (\ref{item:monster}) as well, at least under mild set-theoretical hypotheses \cite[Statement 2.2]{lieberman-rosicky}. Note that (\ref{item:aec}) is equivalent to the conjunction of (\ref{item:mono}),(\ref{item:acccat}),(\ref{item:filco}) and the existence of a faithful, nearly full, full-on-isomorphisms, monomorphism-preserving and filtered-colimit-preserving functor $\calC \to \calD$ for some finitely accessible category $\calD$ \cite[Corollary 5.7]{beke-rosicky}.\footnote{The relationship to the definition of \cite{shelah2009classification} is that the objects of $\calC$ are the models of the AEC and the morphisms of $\calC$ are composites of isomorphisms and inclusions of ``abstract elementary substructures"; the category $\calD$ is essentially the category of structures for the language and all embeddings, and the functor $\calC \to \calD$ is the forgetful functor.} We have not defined what a nearly full functor is, but the following will suffice for our purposes.
\begin{fact}\label{fact:fin-acc-AEC}
Every finitely accessible category of monomorphisms $\calC$ is equivalent to an AEC.
\end{fact}
\begin{proof}
By \cite[Corollary 5.7]{beke-rosicky}, it suffices to define a full with respect to isomorphisms and nearly full embedding  from $\calC$ to some finitely accessible category $\calD$ which preserves filtered colimits and monomorphisms. The identity functor $\id_\calC$ is such an embedding.
\end{proof}

Our categorical analog of Question \ref{quest:hom-type} thus takes the following form:

\begin{question}\label{quest:aec-hty}
Let $X$ be a CW complex. For which subsets of the properties (1-8) of Definition \ref{rmk:misc-props} can we find categories $\calC$ with $|\calC| \simeq X$?
\end{question}

We obtain a partial solution:

\begin{thm}\label{thm:aechty}
Let $\kappa$ be a small regular cardinal. Then every CW complex $X$ is homotopy equivalent to the classifying space of a finitely accessible category $\calE$ of monomorphisms with amalgamation for $\kappa$-presentable objects. In particular, $\calE$ is an AEC with amalgamation for $\kappa$-presentable objects.
\end{thm}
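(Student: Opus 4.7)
Given a CW complex $X$, I will construct $\calE = \Ind_\omega(\calD)$ for a suitable small category $\calD$ of monomorphisms with $|\calD|\simeq X$ in which every span admits an amalgam. Proposition~\ref{prop:accprops}(\ref{prop:accprops.item:ind-acc}) then gives that $\calE$ is finitely accessible, Proposition~\ref{prop:accprops}(\ref{prop:accprops.item:ind-mono}) gives that all morphisms in $\calE$ are monomorphisms, and Remark~\ref{rmk:ind-eq} gives $|\calE|\simeq|\calD|\simeq X$. Amalgamation for $\kappa$-presentable objects of $\calE$ then follows because any span of $\kappa$-presentables in $\calE$ may be presented level-wise as a $\kappa$-small filtered system of spans in $\calD$, each of which admits an amalgam by construction, and standard interchange arguments allow coherent choices to assemble into an amalgam in $\calE$.

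\textbf{Building $\calD$.} Start from a small poset $P$ with $|P|\simeq X$, available by Thomason's theorem. Every morphism in a poset is trivially a monomorphism, but $P$ need not have amalgamation. Transfinitely enlarge $P$ to $\calD = \bigcup_\alpha \calD_\alpha$ starting from $\calD_0 = P$: at each successor stage, for every span $B \xleftarrow{f} A \xrightarrow{g} C$ in $\calD_\alpha$ lacking an amalgam, freely adjoin a fresh object $D$ together with morphisms $g': B \to D$ and $f': C \to D$ satisfying only the single equation $g'f = f'g$. At limit stages, take the colimit in $\Cat$. The process stabilizes at a sufficiently large cardinal bound. A direct diagram chase confirms that the new morphisms $g', f'$ are monomorphisms and that no preexisting parallel morphisms in $\calD_\alpha$ are identified, so each $\calD_\alpha$ (and hence $\calD$) is a category of monomorphisms.

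\textbf{Homotopy invariance.} The technical heart is to verify $|\calD_\alpha| \simeq |\calD_{\alpha+1}|$ at each successor stage. Since the fresh object $D$ has no outgoing non-identity morphisms, $\calD_{\alpha+1}$ can be realized as a pushout in $\Cat$ of the span
\[
\calD_\alpha \;\longleftarrow\; (\calD_\alpha \downarrow D) \;\longrightarrow\; (\calD_{\alpha+1} \downarrow D),
\]
where $(\calD_{\alpha+1} \downarrow D)$ has $\id_D$ as terminal object and is thus contractible by Proposition~\ref{prop:contractible}(\ref{prop:contratible.item:initial}). So it suffices to show that $(\calD_\alpha \downarrow D)$ is also contractible. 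Any morphism $X \to D$ in $\calD_{\alpha+1}$ with $X \in \calD_\alpha$ factors as $g'h$ for some $h:X\to B$ or as $f'k$ for some $k:X\to C$, and the two factorizations agree exactly when both factor compatibly through $A$. This identifies $(\calD_\alpha \downarrow D)$ as the pushout in $\Cat$ of
\[
(\calD_\alpha \downarrow B) \;\longleftarrow\; (\calD_\alpha \downarrow A) \;\longrightarrow\; (\calD_\alpha \downarrow C),
\]
each of which has a terminal object and is thus contractible. Since the gluing functors are inclusions of full subcategories (inducing subcomplex inclusions of nerves, which are cofibrations of spaces), this categorical pushout computes the homotopy pushout of classifying spaces, which is contractible. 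Hence $\calD_\alpha \hookrightarrow \calD_{\alpha+1}$ is a homotopy equivalence; limit stages are handled by continuity of classifying spaces under filtered colimits.

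\textbf{Main obstacle.} The subtlest point is the homotopy-invariance step just described, specifically justifying that the categorical pushouts in question realize the correct homotopy pushouts on classifying spaces and that the slice-category analysis of morphisms into $D$ is correct in the freely-adjoined $\calD_{\alpha+1}$. A secondary obstacle is the coherent level-wise assembly of amalgams showing amalgamation propagates from $\calD$ to $\Ind_\omega(\calD)_\kappa$; while routine, this requires the usual cofinality/interchange arguments for filtered systems.
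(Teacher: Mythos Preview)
Your construction of $\calD$ and the homotopy-invariance argument are essentially correct and match the paper's approach (Remark~\ref{rmk:ap} sketches exactly this construction, and the slice-category/pushout analysis you give is the same as the paper's). The gap is in the final step.

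The claim that amalgamation in $\calD$ propagates to the $\kappa$-presentable objects of $\Ind(\calD)$ via ``standard interchange arguments'' is precisely what fails, and the paper flags this explicitly in Remark~\ref{rmk:ap}: the $\Ind$ construction does not preserve amalgamation (with a reference to a counterexample). Concretely, the $\kappa$-presentable objects of $\Ind(\calD)$ are $\Ind^\kappa(\calD)$, so you would need to amalgamate a span $B \leftarrow A \to C$ where $A,B,C$ are $\kappa$-small filtered colimits of objects of $\calD$. Your freely adjoined amalgams $B_i \ast_{A_i} C_i$ have no non-identity maps \emph{out} of them at the stage they are created, and the later stages of your construction never produce the particular morphisms $B_i \ast_{A_i} C_i \to B_j \ast_{A_j} C_j$ needed to assemble a coherent filtered system. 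So the level-wise amalgams cannot be glued.

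The paper's Construction~\ref{const:mainconst} is designed exactly to fix this: it \emph{interleaves} the step of adjoining amalgams with the step of applying $\Ind^\kappa$, iterating $\kappa$ many times. At stage $\alpha$ one forms $\calC^\alpha = \Ind^\kappa(\calD^\alpha)$ and then adjoins amalgams for every span in $\calC^\alpha$ (not merely in $\calD^\alpha$) to obtain $\calD^{\alpha+1}$. The $\kappa$-presentable objects of the final $\calE = \Ind(\calD^\kappa)$ are then identified with $\varinjlim_{\alpha<\kappa}\calC^\alpha$, and any span there lives in some $\calC^\alpha$ and is amalgamated in $\calC^{\alpha+1}$. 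Your one-pass construction collapses this interleaving and thereby loses control over spans whose vertices are genuine filtered colimits.
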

The category $\calE$ is different from $\Mod(T)$ in general since $\Mod(T)$ is rarely finitely accessible when $T$ has an infinite model. However, $\Mod(\mathrm{ACF})$ is indeed finitely accessible.

We provide below a construction (Construction \ref{const:mainconst}) validating Theorem \ref{thm:aechty}. Thus we attain, for an arbitrary homotopy type, property (\ref{item:aec}), and \emph{a fortiori} properties (\ref{item:mono}), (\ref{item:acccat}), (\ref{item:filco}), and (at least under set-theoretical hypotheses) (\ref{item:monster}) of Remark (\ref{rmk:misc-props}), but only a weakened form of properties (\ref{item:jep}) and (\ref{item:ap}). We do not know whether property (\ref{item:nmm}) will always hold in Construction \ref{const:mainconst}. It should be noted that in the present construction, we will have $\calE = \Ind(\calD)$ for a category $\calD$ with $\#\calD \geq \kappa$.

In a separate paper \cite{campion-ye}, the first and third author have provided an alternate construction yielding a category $\calE$ which is canonically isomorphic to an AEC with amalgamation for all objects and no maximal models, thus attaining all 8 properties of Remark \ref{rmk:misc-props} for an arbitrary homotopy type (with the caveat, of course, that (\ref{item:jep}) is attained only for connected homotopy types).

% Loosely speaking, the large objects in the category $\calE$ do not contribute to the homotopy type of classifying space of $\calE$. But we had difficulties to generalize the above to a category with amalgamation. It is tempting further investigate whether or not there is a category with properties (1-8) of Definition \ref{def:misc-props} realizing arbitrary homotopy type.

Before embarking on the proof of Theorem \ref{thm:aechty}, we make some easier observations.

\begin{obs}\label{obs:nmm}
Every homotopy type is realized as $|\calC|$ where $\calC$ is an AEC with no maximal models. In particular, $\calC$ satisfies (\ref{item:nmm}),(\ref{item:mono}),(\ref{item:acccat}),(\ref{item:filco}),(\ref{item:aec}) above.
For by subdivision, every homotopy type $X$ is the classifying space of a (small) poset $P$ (Fact \ref{fact:poset}). Then $\Ind(P)$ is again a small poset and so $\Ind(P)\times \Set^{\geq \omega}_\inj$ is again a category of monomorphisms (by Lemma \ref{lem:ind-mono}) and no maximal models, where $\Set^{\geq \omega}_\inj$ is the category of small infinite sets and injections. $\Ind(P)\times \Set^{\geq \omega}_\inj$ is finitely accessible since a finite product of accessible categories is again finitely accessible \cite[Proposition 6.1.12]{kashiwara-schapira}. Hence (3),(4),(5),(6),(7) in Remark \ref{rmk:misc-props} are satisfied. We have $|\Ind(P) \times \Set^{\geq \omega}_\inj| \simeq |\Ind(P)| \times |\Set^{\geq \omega}_\inj| \simeq |P| \times |\Delta^0| \simeq X$.
\end{obs}

\begin{obs}\label{obs:ap}
It is not difficult to formally adjoin amalgamation to a category $\calC$: for each amalgamation problem $B \leftarrow A \to C$, adjoin an object $B \ast_A C$ with $\Hom(X,B \ast_A C) = \Hom(X,B) \cup_{\Hom(X,A)}\Hom(X,C)$ for $X \in \calC$ and no other nonidentity morphisms besides those in $\calC$. Apply this construction iteratively. This construction does not change the homotopy type if $\calC$ is a category of monomorphisms -- the argument is similar to the one found in the proof of Theorem \ref{thm:aechty} below. Thus any homotopy type $X$ is realized by a category $\calC$ satisfying (\ref{item:ap}),(\ref{item:nmm}), (\ref{item:mono}) of Remark \ref{rmk:misc-props}, and if $X$ is connected, then $\calC$ also satisfies (\ref{item:jep}).

Unfortunately, the $\Ind$ construction does not preserve the amalgamation property \cite{kruckman-MO}, and so although $\Ind(\calC)$ is an AEC, it may fail to have amalgamation.
\end{obs}

 %%% MAIN CONSTRUCTION

We now introduce a construction which will validate Theorem \ref{thm:aechty}.

\begin{construction}\label{const:mainconst}
Let $\calC$ be a small category and $\kappa$ be a small regular cardinal. Let $K$ be the poset $2 \times (\kappa+1)$. By induction on $(i,\alpha) \in K$, we construct a diagram of small categories $(\calC^k)_{k \in K}$ indexed by $K$; we will write $\calD^\alpha = \calC^{(0,\alpha)}$ and $\calC^\alpha = \calC^{(1,\alpha)}$. The construction is as follows:

\noindent\textbf{Initial Step:} $\calD^0 = \calC$ 

\noindent\textbf{First Successor Step:} In this step of the construction, we use the facts stated in Recollection \ref{def:ind} about $\Ind$ constructions.
Given $\calD^\alpha$, we define $\calC^\alpha = \Ind^\kappa(\calD^\alpha)$, and $\calD^\alpha \to \calC^\alpha$ the canonical inclusion $y_{\calD^\alpha}$. For $\beta \leq \alpha$, the functor $\calC^\beta \to \calC^\alpha$ is induced functorially from $\calD^\beta \to \calD^\alpha$. 

\noindent\textbf{Second Successor Step:} Given $\calC^\alpha$, we define $\calD^{\alpha+1}$ as follows.

An \textbf{Object} of $\calD^{\alpha+1}$ is either an object of $\calD^\alpha$, or else consists of a span $B \leftarrow A \to C$ in $\calC^\alpha$. In the latter case, the object will be denoted $B \ast_A C$.

\textbf{Morphisms} of $\calD^{\alpha+1}$ are as follows:
    \[\calD^{\alpha+1}(X,Y) = 
        \begin{cases}
            \calD^\alpha(X,Y) & X,Y \in \calD^\alpha \\
            \{\id_X\} & X = Y = B \ast_A C \\
            \emptyset & X = B \ast_A C, X \neq Y \\
            \calC^\alpha(X,B) \cup_{\calC^\alpha(X,A)} \calC^\alpha(X,C) & X \in \calD^\alpha, Y = B \ast_A C
        \end{cases}
    \]

\textbf{Composition} in $\calD^{\alpha+1}$ is defined in the obvious way.
    
\noindent\textbf{Limit Step:} If $\alpha$ is a limit ordinal, we define $\calD^\alpha = \cup_{\beta  < \alpha} \calD^\beta$ (recall that $\calC^\alpha = \Ind^\kappa(\calD^\alpha)$ even when $\alpha$ is a limit).

This completes Construction \ref{const:mainconst}.
\end{construction}

\begin{notation}
We fix some notation for referring to objects and morphisms of the categories $(\calC^k)_{k \in K}$.
\begin{enumerate}
    \item Let $\alpha <\kappa$, note that if $X,Y \in \calC^\alpha$, then $X = \varinjlim_p X_p$ and $Y = \varinjlim_q Y_q$ for some filtered diagrams $(X_p)_{p \in P}, (Y_q)_{q \in Q}$ in $\calD^\alpha$. Recall from Recollection \ref{def:ind} that we have $\Hom(X,Y) = \varprojlim_{p \in P}( \varinjlim_{q \in Q} \Hom(X_p,Y_q))$, so that a morphism $f: X \to Y$ consists of a $P$-tuple of equivalence classes of diagrams $(f_p)_{p \in P}$, where $f_p : X_p \to Y_q$, for some $q$ (dependent on $p$).
    \item Let $\alpha < \kappa$. If $X \in \calD^\alpha$ and $Y \in \calD^{\alpha+1}\setminus\calD^\alpha$, then $Y = B \ast_A C$ for some $B \overset{f}{\leftarrow} A \xrightarrow{g} C \in \calC^\alpha$. If $h \in \calD^{\alpha+1}(X,Y) = \calC^\alpha(X,B) \cup_{\calC^\alpha(X,A)}\calC^\alpha(X,C)$, then we may write $h = (B,h')$ for some $h' \in \calC^{\alpha}(X,B)$, or $h = (C,h')$ for some $h' \in \calC^{\alpha}(X,C)$, or $h = (A,h')$ for some $h' \in \calC^\alpha(X,A)$. These expressions are unique up to the equivalence relation generated by the fact that if $k: Z \to A$ is a morphism in $\calC^\alpha$, then $(A,k) = (B,fk) = (C,gk): Z \to B \ast_A C$.
    \item We denote the various structure functors between the $\calC^k$ as $\Gamma: \calC^\beta \to \calC^\alpha$, $\Delta: \calD^\beta \to \calD^\alpha$, and $y: \calD^\alpha \to \calC^\alpha$.
\end{enumerate}
\end{notation}

\begin{lem}\label{lem:aec}
Let $\kappa$ be a small regular cardinal and $\calC$ a small category of monomorphisms. Consider the categories $(\calC^k)_{k \in K}$ of Construction \ref{const:mainconst}.
\begin{enumerate}
    \item\label{lem:aec.item:colim} $\calC^{\beta}$ has $\kappa$-small filtered colimits, preserved by the functor $\Gamma : \calC^{\beta} \to \calC^{\alpha}$ for $\beta \leq \alpha$. 
    \item\label{lem:aec.item:pres} If $X \in \calD^\alpha$, then $\calC^\alpha(X,-)$ commutes with colimits of $\kappa$-small directed systems in $\calC^{\alpha}$.
    \item\label{lem:aec.item:gen} Every object $X \in \calC^{\alpha}$ is a colimit of a $\kappa$-small directed system of objects of $\calD^{\alpha}$.
    % \item\label{lem:aec.item:card} If $\kappa$ is strongly inaccessible and $0 < \Card(\Ob(\sk\calC)) \leq \kappa$, then we have $\Card(\Ob(\sk\calC^\kappa))$ = $\kappa$.
    \item\label{lem:aec.item:monic} Every morphism of $\calC^{(i,\alpha)}$ is monic.
    \item\label{lem:aec.item:heq} The inclusion $\calC^{k} \to \calC^{l}$ induces a homotopy equivalence $|\calC^{k}|\to|\calC^{l}|$ for all $k \leq l \in K$.
    \item\label{lem:aec.item:AP} Any span in $\calC^{\alpha}$ has an amalgam in $\calC^{\alpha+1}$.
\end{enumerate}
In particular, $\calC^\kappa$ is a category of monomorphisms with amalgamation and colimits of $\kappa$-small directed systems and $\calC \to \calC^\kappa$ is a homotopy equivalence. Moreover every object of $\calC^\kappa$ is a colimit of a $\kappa$-small directed system of objects of $\calD^\kappa$, and for every $X \in \calD^\kappa$, $\calC^\kappa(X,-)$ commutes with colimits of $\kappa$-small directed systems.
\end{lem}
\begin{proof}
(\ref{lem:aec.item:colim}), (\ref{lem:aec.item:pres}), and (\ref{lem:aec.item:gen}) follow from the remarks in Recollection \ref{def:ind-res}.
% Fact \ref{prop:accprops}, parts (\ref{prop:accprops.item:ind-acc}) and (\ref{prop:accprops.item:ind-functor}).

% For (\ref{lem:aec.item:card}), the lower bound is clear since we add a new object at each successor stage. The upper bound is also easy by Fact \ref{prop:accprops}(\ref{prop:accprops.item:ind-size}).

We show (\ref{lem:aec.item:monic}) by induction on $(i,\alpha)$. First, $\calC$ is a category of monomorphisms by hypothesis. Next, if $\calD^\alpha$ is a category of monomorphisms, then so is $\calC^\alpha$ by Lemma \ref{lem:ind-mono}. The case of $\calD^\alpha$ where $\alpha$ is a limit ordinal is trivial. It remains to show that if $\calC^\alpha$ is a category of monomorphisms, then so is $\calD^{\alpha+1}$. This necessitates an analysis of morphisms into $B \ast_A C$, the amalgam of a span $B \overset{f}{\leftarrow} A \xrightarrow g C$ in $\calC^\alpha$. By induction $f,g$ are monic in $\calC^\alpha$. It follows that for $D \in \calD^\alpha \subseteq \calD^{\alpha+1}$, the only identifications between morphisms in $\calD^{\alpha+1}(D, B\ast_A C)$ are the identifications $(B,fh) = (A,h) = (C,gh)$ for $h: D \to A$ in $\calC^\alpha$ i.e. these identifications already form an equivalence relation rather than merely generating an equivalence relation (see \ref{eg:colim}). Therefore if $(B,k) = (B,l)$ for $k,l : D \to B$ in $\calC^\alpha$, then either we may immediately conclude that $k=l$, or else we have $k = fk'$ and $l = fl'$ for $k',l': D \to A$ in $\calC^\alpha$, and we have $(A,k') = (A,l')$, from which we may conclude that $k' = l'$. Either way, $(B,l) = (B,k)$ implies that $l=k$. Now we are ready to show that $\calD^{\alpha+1}$ is a category of monomorphisms. The only nontrivial case to check is a morphism $D \to B \ast_A C$ for $D \in \calD^\alpha$; without loss of generality this morphism is of the form $(B,h)$ for some $h: D \to B$ in $\calC^\alpha$. To see that this morphism is monic, consider two morphisms $n,m: E \rightrightarrows D$ in $\calD^{\alpha+1}$; necessarily we have $E \in \calD^{\alpha}$. Suppose that $(B,hn) = (B,hm)$. Then we have seen that we may conclude that $hn = hm$ in $\calC^\alpha$. By induction, $h$ is a monomorphism in $\calC^\alpha$, and so we have $n=m$ in $\calC^\alpha$. Because the inclusion $y: \calD^\alpha \to \calC^\alpha$ is faithful, this is to say that $n=m$ when viewed as morphisms in $\calD^\alpha$. So by definition, $n=m$ when viewed as morphisms in $\calD^{\alpha+1}$, and so $(B,h)$ is a monomorphism as desired.

For (\ref{lem:aec.item:heq}), we proceed by induction on $l$. First suppose that $l=(1,\alpha)$, so $\calC^l = \calC^\alpha$. Consider first the inclusion $\calD^\alpha \to \calC^\alpha$. As in Remark \ref{rmk:ind-eq}, $\calD^\alpha \downarrow X$ is filtered for each $X \in \calC^\alpha$ and in particular contractible, so by Quillen's Theorem A (Theorem ~\ref{thm:quillen-a}), the inclusion induces a homotopy equivalence. For $\beta < \alpha$, the functor $\calD^\beta \to \calD^\alpha$ induces a homotopy equivalence by induction, and thus by composition the functor $\calD^\beta \to \calC^\alpha$, which factors as $\calD^\beta \to \calD^\alpha \to \calC^\alpha$, also induces a homotopy equivalence. Finally, for $\beta < \alpha$ the functor $\calC^\beta \to \calC^\alpha$ becomes the homotopy equivalence $\calD^\beta \to \calC^\alpha$ after precomposition with the homotopy equivalence $\calD^\beta \to \calC^\beta$, and so is also a homotopy equivalence since homotopy equivalences are isomorphisms in the homotopy category (~Definition \ref{def:homotopy}).

Next suppose that $l = (0,\alpha)$ for $\alpha$ a limit ordinal, so that $\calC^l = \calD^\alpha$. Then for any $\beta < \alpha$, the inclusion $\calD^\beta \to \calD^\alpha$ is a filtered colimit of homotopy equivalences, and hence induces a homotopy equivalence (Fact \ref{fact:fil-whe}). Finally, suppose that $l = (0,\alpha+1)$ so that $\calC^l = \calD^{\alpha+1}$; it will suffice to show that the inclusion $\calD^\alpha \to \calD^{\alpha+1}$ induces a homotopy equivalence. By Quillen's Theorem A( Theorem~\ref{thm:quillen-a}) it suffices to show that for each $X \in \calD^{\alpha+1}$, the slice category $\calD^\alpha \downarrow X$ is contractible. If $X \in \calD^\alpha$, (or more generally if $X \in \calC^\alpha$) then we have just seen that $\calD^\alpha \downarrow X$ is contractible. If $X = B \ast_A C$ for $A,B,C \in \calC^\alpha$, then by the considerations given in the proof of (\ref{lem:aec.item:monic}) above, we have $(\calD^\alpha\downarrow X)_n = (\calD^\alpha\downarrow B)_n \cup_{(\calD^\alpha\downarrow A)_n} (\calD^\alpha\downarrow C)_n$ for each $n \in \nats$, where $\calA_n$ denotes the set $n$-chains of composable morphisms of a category $\calA$ as in Definition \ref{def:classifying-space}. Moreover, the pushout is along injective maps by (\ref{lem:aec.item:monic}). Thus we have a pushout of simplicial sets along injections: $\nerve (\calD^\alpha \downarrow X) = \nerve (\calD^\alpha \downarrow B) \cup_{\nerve (\calD^\alpha \downarrow A)} \nerve (\calD^\alpha \downarrow C)$ (here $\nerve$ denotes the nerve functor -- cf. Remark \ref{rmk:nerve-realization}). Because geometric realization preserves pushouts (Fact \ref{fact:geom-colim}) and takes injections to injective cellular maps, we obtain a pushout of spaces along cellular maps: $|\calD^\alpha \downarrow X| = |\calD^\alpha \downarrow B| \cup_{|\calD^\alpha \downarrow A|}|\calD^\alpha \downarrow C|$. We have seen that the spaces $|\calD^\alpha \downarrow A|,|\calD^\alpha \downarrow B|,|\calD^\alpha \downarrow C|$ are contractible, so by Fact \ref{fact:po-contr}, it follows that $|\calD^\alpha \downarrow X|$ is also contractible.

(\ref{lem:aec.item:AP}): Note that $\Gamma y = y \Delta$. Let $B \overset f \leftarrow A \overset g \to C$ be a span in $\calC^\alpha$; we would like to show that $\Gamma(B \overset f \leftarrow A \overset g \to C)$ has an amalgam in $\calC^{\alpha+1}$. We claim that $y(B \ast_A C)$ is such an amalgam. This is not immediately obvious, since there is no functor $\calC^\alpha \to \calD^{\alpha+1}$, so we check it carefully. First, we define morphisms $\Gamma(y(B)) \xrightarrow j y(B \ast_A C)$, $\Gamma(y(C)) \xrightarrow k y(B \ast_A C)$ in $\calC^{\alpha+1}$. Write $A,B,C$ as $\kappa$-small filtered colimits of objects of $\calD^\alpha$: $A = \varinjlim_p y(A_p)$, $B = \varinjlim_q y(B_q)$, $C = \varinjlim_r y(C_r)$, and write $i_p: y(A_p) \to A$, $j_q: y(B_q) \to B$, $k_r: y(C_r) \to C$ for the canonical inclusions. Because $\Gamma: \calC^\alpha \to \calC^{\alpha+1}$ preserves $\kappa$-small filtered colimits by (\ref{lem:aec.item:colim}), these colimit expressions are equally valid in $\calC^{\alpha+1}$, e.g. $\Gamma(B) = \varinjlim_q y(\Delta(B_q))= \varinjlim_q \Gamma(y(B_q)) $. Consider the morphisms $(B,j_q): \Delta(B_q) \to B \ast_A C$ for each $q$. We claim that these form a cocone, i.e. that $(B,j_q) \Delta(B_\theta) = (B,j_{q'})$ for each morphism $\theta: q' \to q$. This follows from the fact that $j_q B_\theta = j_{q'}$ and the definition of composition in $\calD^{\alpha+1}$. Thus, we obtain a morphism $j := (y(B,j_q))_q : \Gamma(y(B)) = y(\Delta(B)) \to y(B \ast_A C)$. Similarly, there is a morphism $k:= (y(C,k_r))_r: \Gamma(y(C)) = y(\Delta(C)) \to y(B \ast_A C)$. Now we claim that $j,k$ are an amalgam of $\Gamma(f),\Gamma(g)$, i.e. that $j\Gamma(f) = k\Gamma(g)$. By the universal property of the colimit $\Gamma(A) = \varinjlim_p \Gamma(y(A_p)) = \varinjlim_p y(\Delta(A_p))$, it suffices to check that $j\Gamma(f)\Gamma(i_p) = k\Gamma(g) \Gamma(i_p)$ for each $p$. Now $j \Gamma(f)\Gamma(i_p) = j \Gamma(fi_p)$. Because the colimit $B = \varinjlim_q y(B_q)$ is filtered and $\kappa$-small and $A_p \in \calD^\alpha$, by (\ref{lem:aec.item:pres}) we have $fi_p = j_q y(\tilde f)$ for some $\tilde f: A_p \to B_q$ in $\calD^\alpha$. So $j \Gamma(fi_p) = j \Gamma(j_q y(\tilde f)) = j \Gamma(j_q) \Gamma(y(\tilde f))$. Now by definition of $j$, we have $j \Gamma(j_q) = y(B,j_q)$. Thus $j \Gamma(j_q) \Gamma(y(\tilde f)) = y((B,j_q))y(\Delta(\tilde f)) = y((B,j_q) \Delta(\tilde f)) = y(B,j_q y(\tilde f)) = y(B,fi_p) = y(A,i_p)$. Similarly, we have $k\Gamma(g) \Gamma(i_p) = y(A,i_p)$, so the claim is verified.
\end{proof}

\begin{proof}[Proof of Theorem \ref{thm:aechty}]
By Fact \ref{fact:poset}, we may assume that $X$ is the classifying space of a small poset $P$. Let $\calC = P$ in Construction \ref{const:mainconst}, obtaining categories $\calC^\kappa, \calD^\kappa$. Set $\calE = \Ind_\kappa(\calC^\kappa) = \Ind_\kappa(\Ind^\kappa(\calD^\kappa)) = \Ind(\calD^\kappa)$, where the last equation follows from Fact \ref{fact:accprops-ind-ind}. So $\calE = \Ind(\calD^\kappa)$ is a finitely accessible category of monomorphisms, so $\calE$ is an AEC by Fact \ref{fact:fin-acc-AEC}. Finally, following the discussion in Recollection \ref{def:ind}, the $\kappa$-presentable objects of $\calE$ are the retracts of objects in $\Ind^\kappa(\calD^\kappa) = \Ind^\kappa(\cup_{\alpha < \kappa}\calD^\alpha) = \varinjlim_{\alpha < \kappa} \Ind^\kappa(\calD^\alpha) = \varinjlim_{\alpha < \kappa} \calC^\alpha$, but because this is a category of monomorphisms, there are no nontrivial retracts and so these are precisely the $\kappa$-presentable objects. Note that spans in $\calC^\alpha$ have amalgams in $\calC^{\alpha+1}$ by Lemma \ref{lem:aec} (\ref{lem:aec.item:AP}). So $\calC^\kappa=\Ind^\kappa(\calD^\kappa)$ has amalgamation. That is, $\calE$ has amalgamation for $\kappa$-presentable objects. Moreover, by Remark \ref{rmk:ind-eq}, $\calE$ has the same homotopy type as $\calC^\kappa$, which in turn by Lemma \ref{lem:aec}(\ref{lem:aec.item:heq}) has the same homotopy type as $X$.
\end{proof}

\subsection{Criteria for asphericity}\label{subsec:higher-hty-2}
It is worth noting that there are natural conditions on a category $\calC$ implying that $|\calC|$ is aspherical.\\
One useful criterion comes from the following theorem of Par\'e:

\begin{thm}[{\cite[Theorem 2]{pare}}]\label{thm:pare}
A category has pullbacks if and only if it has all finite simply-connected limits.

Dually, a category has pushouts if and only if it has all finite simply-connected colimits.
\end{thm}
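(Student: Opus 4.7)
The plan is to prove the colimit version (the limit statement then follows by passing to $\calC^\op$, using $|J^\op| \cong |J|$). The easy direction is immediate: the span category $\cdot \leftarrow \cdot \to \cdot$ indexing pushouts has a contractible, hence simply connected, classifying space, so a pushout is itself a finite simply-connected colimit.

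For the substantive direction, suppose $\calC$ has pushouts. Given a finite category $J$ with $|J|$ simply connected and a diagram $F \colon J \to \calC$, I would construct $\varinjlim_J F$ by a cellular induction on $J$. The key categorical input is a structural lemma: every finite simply connected category $J$ admits a filtration $\mathbf{1} = J_0 \hookrightarrow J_1 \hookrightarrow \cdots \hookrightarrow J_n = J$ in $\Cat$ in which each step is realized as a pushout $J_k = J_{k-1} \cup_{A_k} B_k$ in $\Cat$ with each $A_k$ and $B_k$ simply connected and strictly simpler than $J$ in a well-founded sense. This is the categorical shadow of the fact that a finite simply connected $2$-complex can be assembled from a point by attaching cells along null-homotopic boundaries, and simple connectedness of $|J|$ is precisely what guarantees that the gluing data $A_k$ can always be chosen simply connected, keeping the induction inside our hypothesis class.

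Once the filtration is in hand, the computation of $\varinjlim_J F$ proceeds by induction. At each stage $\varinjlim_{J_{k-1}} F$, $\varinjlim_{A_k} F|_{A_k}$, and $\varinjlim_{B_k} F|_{B_k}$ exist in $\calC$ by the inductive hypothesis, and the identity
$$\varinjlim_{J_k} F \;=\; \varinjlim_{J_{k-1}} F \;\cup_{\varinjlim_{A_k} F}\; \varinjlim_{B_k} F$$
(a standard consequence of the fact that the colimit functor preserves colimits in the shape variable, applied to the pushout decomposition of $J_k$ in $\Cat$) allows us to realize $\varinjlim_{J_k} F$ as a single pushout in $\calC$, which exists by assumption. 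The base case is trivial since $\varinjlim_{\mathbf{1}} F = F(\ast)$.

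The main obstacle is the structural lemma. It is the categorical analog of the cellular decomposition of a finite simply connected CW complex into 1- and 2-cells, and Par\'e's original paper constructs such a filtration directly at the level of categories using generators and relations. A modern alternative is to invoke the Thomason model structure on $\Cat$, in which pushouts along Thomason cofibrations model homotopy pushouts of classifying spaces; one then lifts a cellular decomposition of $|J|$ to $\Cat$, with the simple-connectedness of $|J|$ ensuring that all intermediate glue categories can be arranged to have simply connected classifying space. The crux in either approach is that without simple connectedness one would inevitably require categorical pushout-steps involving a non-simply-connected glue category such as $\cdot \rightrightarrows \cdot$, whose colimit is a coequalizer and cannot be reduced to pushouts; it is exactly the vanishing of $\pi_1(|J|)$ that lets us avoid this.
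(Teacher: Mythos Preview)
The paper does not prove this theorem at all: it is stated with a citation to Par\'e and immediately followed by its corollary, with no argument given. So there is no ``paper's own proof'' to compare against; the theorem is imported wholesale from the literature.

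As for your sketch itself: the overall architecture (reduce to a structural lemma decomposing any finite simply-connected $J$ into iterated pushouts in $\Cat$ along simply-connected pieces, then induct) is indeed in the spirit of Par\'e's argument, and your identification of the structural lemma as ``the main obstacle'' is accurate --- it is essentially the entire content of the theorem. Your deferral of that lemma to Par\'e's paper is fine, but then your write-up is really just a reduction to the cited result rather than an independent proof.

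One genuine caution about your ``modern alternative'' via the Thomason model structure: a Thomason weak equivalence $J' \to J$ (i.e.\ a functor inducing a weak homotopy equivalence $|J'| \simeq |J|$) does \emph{not} in general preserve or reflect the existence of colimits --- that requires the functor to be \emph{final}, which is a strictly stronger condition. For instance, $\{0\} \hookrightarrow \{0 \to 1\}$ is a Thomason weak equivalence between contractible categories, but colimits over these shapes differ. So replacing $J$ by a Thomason-cofibrant model and decomposing that model cellularly does not obviously tell you anything about $J$-colimits in $\calC$. If you want to pursue this route, you would need to arrange the cellular replacement to come with a final functor to $J$, which is essentially back to proving the structural lemma directly.
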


Here, a \emph{simply-connected limit} is a limit indexed by a diagram $F: \calD \to \calC$ where $|\calD|$ is simply-connected, and dually for simply-connected colimits.

\begin{cor}
If $\calC$ has pullbacks or pushouts, then $|\calC|$ is aspherical.
\end{cor}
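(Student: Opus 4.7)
The plan is to treat the pushout case, since the pullback case follows by applying the pushout case to $\calC^\op$ together with the natural homeomorphism $|\calC| \cong |\calC^\op|$. To establish asphericity it suffices to show that every continuous map $f: S^n \to |\calC|$ with $n \geq 2$ is null-homotopic, so this is our target.

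The first step will be to replace $f$, up to homotopy, by the classifying space map $|F|: |\calJ| \to |\calC|$ of some functor $F: \calJ \to \calC$ with $\calJ$ a finite category and $|\calJ| \simeq S^n$. Concretely, since $|\calC|$ is the realization of the nerve of $\calC$ and $S^n$ is a compact CW complex, standard simplicial approximation lets us realize $f$, up to homotopy, as the geometric realization of a simplicial map $g: K \to N(\calC)$ from a finite simplicial model $K$ of $S^n$. Such a simplicial map is the same datum as a functor from the category of simplices of $K$ to $\calC$, so we take $\calJ$ to be this category of simplices. Then $|\calJ| \simeq |K| \simeq S^n$, and $\calJ$ is finite because $K$ has only finitely many nondegenerate simplices.

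Since $n \geq 2$, the space $|\calJ|$ is simply-connected. By Par\'e's theorem (Theorem \ref{thm:pare}), the existence of pushouts in $\calC$ implies the existence of all finite simply-connected colimits, so $F$ admits a colimit $L$ together with a universal cocone $\lambda: F \Rightarrow \const_L$. As recalled in Section \ref{sec:hty-bkgd}, such a natural transformation corresponds to a functor $\calJ \times [1] \to \calC$, and passing to classifying spaces yields a map $|\calJ| \times [0,1] \to |\calC|$ restricting to $|F|$ on $|\calJ| \times \{0\}$ and to the constant map at $L$ on $|\calJ| \times \{1\}$. This is precisely a null-homotopy of $|F|$, and hence of $f$, as required.

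The main obstacle is the approximation step in the second paragraph: precisely reducing an arbitrary continuous map $S^n \to |\calC|$ to a functor out of a finite, simply-connected category. This relies on standard but nontrivial correspondences between simplicial sets, simplicial complexes, and their associated categories of simplices, the key property being that the classifying space of the category of simplices of a finite simplicial set $K$ is homotopy equivalent to $|K|$, and thus inherits simple-connectivity from $S^n$ when $n \geq 2$. Once this categorical model of $f$ is in hand, the remainder of the proof is essentially formal from Par\'e's theorem, since in a category with pushouts every simply-connected diagram can be ``capped off'' by a cocone that realizes to a null-homotopy.
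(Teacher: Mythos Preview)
Your strategy matches the paper's: replace $f$ by $|F|$ for a functor $F$ out of a finite simply-connected category, then invoke Par\'e's theorem to obtain a (co)cone and read off a null-homotopy. The paper carries out the replacement by subdividing until the source is the nerve of a finite \emph{poset} $P$, so that a cellular map $|P| \to |\calC|$ is literally $|F|$ for some functor $F: P \to \calC$.

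There is a genuine gap in your version of the reduction. The category of simplices $\Delta/K$ of a nonempty simplicial set $K$ is \emph{never} finite, since every simplex has infinitely many degeneracies; thus your claim that ``$\calJ$ is finite because $K$ has only finitely many nondegenerate simplices'' fails, and Par\'e's theorem---which only supplies \emph{finite} simply-connected (co)limits---does not apply to $F:\calJ \to \calC$. Restricting to nondegenerate simplices does not immediately repair this either, because a simplicial map $K \to N(\calC)$ is not the same datum as a functor out of the face poset of $K$. The fix is exactly what the paper does: after sufficient barycentric subdivision, the finite simplicial model of $S^n$ may be taken to be the nerve of a finite poset $P$, and a simplicial map $NP \to N\calC$ \emph{is} a functor $P \to \calC$. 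With $\calJ = P$ your remaining argument (cocone $\Rightarrow$ null-homotopy via the induced map $|\calJ|\times[0,1]\to|\calC|$) goes through verbatim.
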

In the proof, we freely use the tools of simplicial homotopy theory. For an introduction, see \cite{riehl}, and for a textbook account, see \cite{goerss_jardine}.
\begin{proof}
Let $n \geq 2$. An element of $\pi_n(|\calC|)$ is represented by a cellular map $f: X \to |\calC|$ from some simplicial complex $X$ weakly homotopy equivalent to $S^n$. After subdividing if necessary, we may assume that $X$ is the classifying space of a finite poset $X = |P|$, such that $f = |F|$ for some functor $F: P \to \calC$. Now, $|P| \simeq S^n$ is simply-connected, and $\calC$ has all simply connected limits (or colimits) by Theorem \ref{thm:pare} so $F$ has a cone or cocone in $\calC$, for example the (co)limiting one. This (co)cone allows us to extend $f$ to the inclusion of $X$ into a cone on $X$, which is contractible. So $f$ represents a trivial element in $\pi_n(|\calC|)$.
\end{proof}

For example, $\Mod(T)$ has pullbacks if the intersection of two elementary sub-models is always an elementary sub-model. We thank the referee for pointing out that the above is equivalent to saying that algebraically closed subsets are models. On the other hand, it seems that $\Mod(T)$ does not have pushouts when $T$ has an infinite model. We thank the referee for the following argument. Take $M \preceq \bbU$ to be a model and let $f\in\Aut(\bbU/M)$ be nontrivial. Then take $M_1 \succeq M$ such that $f(M_1)\neq M_1$ and let $M_2 = f(M_1)$. Now, $M_1\leftarrow M \to M_2$ admits both $\bbU$ (with
inclusions) and $M_2$ (with f and equality) as cocones, but there is no cocone embedding into both $M_2$ and $\bbU$.

Another criterion comes from Dwyer and Kan:

\begin{thm}[{\cite[Proposition 7.4]{dwyer-kan-II}}]
If $\calC$ admits a \emph{calculus of right fractions}, then $|\calC|$ is aspherical.
\end{thm}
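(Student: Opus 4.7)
The plan is to show that the canonical functor $F: \calC \to \Pi_1(\calC)$ induces a homotopy equivalence $|\calC| \simeq |\Pi_1(\calC)|$. Since $\Pi_1(\calC)$ is a groupoid, its classifying space decomposes as a disjoint union $\coprod_i B G_i$ of Eilenberg--MacLane spaces $K(G_i, 1)$, one per connected component; each such piece has vanishing $\pi_n$ for $n \geq 2$, so $|\Pi_1(\calC)|$, and therefore $|\calC|$, is aspherical.

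To establish this equivalence, I would apply Quillen's Theorem A (Theorem \ref{thm:quillen-a}) to $F$: it suffices to check that each slice category $F \downarrow G$, for $G \in \Pi_1(\calC)$, is contractible. The calculus of right fractions enters exactly at this step. It provides a concrete parameterization of morphisms in $\Pi_1(\calC)$ by right fractions, i.e.\ spans $X \xleftarrow{s} Z \xrightarrow{f} X_0$ in $\calC$ (where $X_0$ is a chosen representative of the connected component of $G$), modulo an explicit equivalence. Under this description the slice $F \downarrow G$ becomes a combinatorial object internal to $\calC$, and the Ore and coequalization axioms should translate into filteredness-type properties of $F \downarrow G$, yielding contractibility via Proposition \ref{prop:filtered-contractible}.

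The main obstacle is the bookkeeping: matching the axioms of the fractions calculus with the standard filteredness conditions, handling the equivalence relation on fractions, and keeping track of directions. In particular, right Ore completes cospans while ordinary filteredness asks for cocones over pairs of objects, so one may well need the dual form of Quillen A, working with $G \downarrow F$, to align with right rather than left fractions. If this direct argument proves too delicate, one can instead invoke Dwyer and Kan's simplicial localization machinery \cite{dwyer-kan-II}: they show that a calculus of right fractions forces the mapping spaces in the hammock localization $L^H(\calC, \calC)$ to be homotopically discrete, which immediately implies that $|\calC|$ is weakly equivalent to $|\Pi_1(\calC)|$ and therefore aspherical.
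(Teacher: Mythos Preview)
The paper does not give a proof of this statement at all: it is quoted as a black-box result of Dwyer and Kan, with only the citation \cite{dwyer-kan-II}, and is then immediately specialized to the corollary about $\Mod(T)$. So there is no ``paper's own proof'' to compare your proposal against.

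That said, your sketch is a reasonable outline of how one would actually establish the cited theorem. The strategy of showing that the canonical functor $\calC \to \Pi_1(\calC)$ is a homotopy equivalence, via Quillen's Theorem A and contractibility of the relevant comma categories, is exactly the kind of argument the right-fractions axioms are designed to support: the Ore condition (cones on cospans) and the cancellability condition together give the cofilteredness needed. Your caveat about possibly needing the dual form of Theorem~A to align ``right'' with the correct variance is well taken and is the only place real care is required. Your fallback---invoking the Dwyer--Kan result that a calculus of right fractions forces homotopically discrete mapping spaces in $L^H(\calC,\calC)$---is in fact precisely the content of the cited reference, so that route is not so much an alternative as the original proof itself.
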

Here $\calC$ is said to admit a \emph{calculus of right fractions} if the following two criteria hold:
\begin{enumerate}
\item For every cospan $x \to z \leftarrow y$, there is a cone.
\item If $fu = fv$, then there exists $g$ such that $ug = vg$.
\end{enumerate}
In the case of $\Mod(T)$, condition (2) is trivial because every morphism $f$ is a monomorphism. (1) is equivalent to the fact that any intersection of two elementary sub-models contains an elementary sub-model. And this is equivalent to the fact that $\acl(\emptyset)$ is a model.

\begin{cor}
Suppose $T$ has the property that $\acl(\emptyset)$ is a model, then $|\Mod(T)|$ is aspherical, so that $|\Mod(T)| \simeq B \Gal_L(T)$ and $\Gal_L(T)\cong \Aut(\acl(\emptyset))$.
\end{cor}

Unfortunately, when $T$ has an infinite model, $\Mod(T)$ never has the dual notion of a calculus of \emph{left} fractions because, though the dual of condition (1) is just the amalgamation property, the dual of condition (2) is never satisfied: since all morphisms of $\Mod(T)$ are monomorphisms, the dual of condition (2) would imply that $\Mod(T)$ was a poset. 

% \bibliographystyle{plain}
% \bibliography{bibliography}
\printbibliography

\end{document}